\newtheorem*{mainthm}{Main Theorem}
\newcommand{\pto}{\rightharpoonup}
\newcommand{\E}{\mathbf{E}}
\renewcommand{\C}{\mathcal{C}}
\renewcommand{\rho}{\varrho}
\newcommand{\req}{\mathtt{Req}}
\newcommand{\eval}{\mathtt{eval}}
\newcommand{\hdesc}{\mathtt{hdesc}}
\newcommand{\desc}{\mathtt{desc}}
\newcommand{\loc}{\mathtt{loc}}
\newcommand{\nlocsep}{\,\hbox{${\downarrow}\kern-.5em\hbox{\char'57}$}\,}
\newcommand{\namalg}{\,\hbox{${\amalg}\kern-.7em\hbox{\char'57}$}\,}
\begin{document}

\title{The Geometry of $L^k$-Canonization I: \\Rosiness from Efficient Constructibility}
\author{Cameron Donnay Hill\footnote{Correspondence to:   University of Notre Dame,
Department of Mathematics,
255 Hurley, Notre Dame, IN 46556. Email: \texttt{cameron.hill.136@nd.edu}. Telephone:  +1-574-631-7776}\\
Department  of  Mathematics,  University  of  Notre  Dame}
\date{ }
\maketitle

\begin{abstract}
We demonstrate that for the $k$-variable theory $T$ of a finite structure (satisfying certain amalgamation conditions), if finite models of $T$ can be recovered from diagrams of finite {\em subsets} of model of $T$ in a certain ``efficient'' way, then $T$ is rosy -- in fact, a certain natural $\aleph_0$-categorical completion $T^{\lim}$ of $T$ is super-rosy of finite $U^\thorn$-rank. In an appendix, we also show that any $k$-variable theory $T$ of a finite structure for which the Strong $L^k$-Canonization Problem is efficient soluble has the necessary amalgamation properties up to taking an appropriate reduct. \end{abstract}

%%%%%%%%%%%%%%%%%%%%%%%%%%%%%%%%%%%%%%%%%%%%%%%%%%%%%%%%%%%%%%%%%%%%%%%%
%%%%%%%%%%%%%%%%%%%%%%%%%%%%%%%%%%%%%%%%%%%%%%%%%%%%%%%%%%%%%%%%%%%%%%%%
%%%%%%%%%%%%%%%%%%%%%%%%%%%%%%%%%%%%%%%%%%%%%%%%%%%%%%%%%%%%%%%%%%%%%%%%
%%%%%%%%%%%%%%%%%%%%%%%%%%%%%%%%%%%%%%%%%%%%%%%%%%%%%%%%%%%%%%%%%%%%%%%%
%%%%%%%%%%%%%%%%%%%%%%%%%%%%%%%%%%%%%%%%%%%%%%%%%%%%%%%%%%%%%%%%%%%%%%%%
\section*{Introduction}

This  article is the second of a three-part series (with \cite{me-2011a} and \cite{me-2012-canonization}) examining the model-theoretic geometry of an algorithmic problem -- the $L^k$-Canonization Problem. Here, $L^k$ denotes the fragment of first-order logic consisting of formulas with at most $k$ distinct variables, \emph{free or bound}, and it can be shown that for any finite structure $\M$ (in a finite relational signature $\rho$), its complete $k$-variable theory $Th^k(\M)$ is finitely axiomatizable in a uniform way. The $L^k$-Canonization Problem asks us to devise an operator $F$ that takes the theories $Th^k(\M)$ to finite models $F(Th^k(\M))\models Th^k(\M)$ -- thus, defining a ``canonical'' model of each complete $k$-variable theory that does have finite models. Composing the canonization operator $F$ with the mapping $\M\mapsto Th^k(\M)$, the operator $F(Th^k(-))$ can be thought of as a solution to a natural relaxation of the Graph Isomorphism Problem, the status of which is a major open problem in complexity theory (see \cite{graph-isom-survey} for an old survey).

Although this problem is known to be unsolvable over the class of \emph{all} $L^k$-theories (in particular, over all $L^3$-theories of finite structures -- see\cite{Grohe97largefinite}), it has been shown that for the class of stable $L^k$-theories  and for the class of super-simple $L^k$-theories with trivial forking dependence (with additional amalgamation assumptions), the $L^k$-Canonization Problem is {\em recursively} solvable (see \cite{koponen-2001} and \cite{koponen-2006}, respectively). 
In both of those cases, resolution of the $L^k$-Canonization Problem is reduced to showing that certain complete first-order theories associated with the original $L^k$-theories have the finite sub-model property. Thus, after the heavy lifting done by the model theory, the algorithm itself is extremely simple-minded. Moreover, the analyses in \cite{koponen-2001} and \cite{koponen-2006} do not assume \emph{a priori} that the $L^k$-theories in question certainly have finite models. In contrast, in this series of articles, we will examine the $L^k$-Canonization Problem for $L^k$-theories that do certainly have finite models. Moreover, we will consider implementation of $L^k$-Canonization operators in a significantly restricted model of computation, leading to a notion we call ``efficient constructibility.'' Finally, we will expand the original $L^k$-Canonization Problem to take whole $L^k$-elementary diagrams as input, which allows us to work with individual $L^k$-theories in a non-trivial way. Thus, the goal of this series of articles is to prove the following:
\begin{mainthm}
Let $\M_0$ be a finite structure, and let $K$ be the class of all finite models of $T = Th^k(\M_0)$. Assuming that $K$ has adequate amalgamation properties, let $T^{\lim}$ be the complete first-order theory of the direct limit of $K$. Then the following are equivalent:
\begin{enumerate}
\item $T^{\lim}$ is super-rosy of finite $U^\thorn$-rank.
\item $K$ is rosy.
\item The $K$-Construction Problem is solvable in a certain ``relational'' model of computation (based on \cite{abiteboul-vianu-1995} and exposed in \cite{me-2011b}):
\begin{quote}
\textsc{Given} $\M[A]$ for some (implicit) $\M\in K$ and $A\subseteq M$,\footnote{Here $\M[A]$ denotes the induced substructure of $\M$ on the subset $A$, so $\M[A]$ \emph{does not} carry any further information about $\M$.}

\textsc{Return} $\N\in K$ such that $A\subseteq N$ and $\N[A] = \M[A]$. 
\end{quote}
\end{enumerate}
\end{mainthm}

In \cite{me-2011a}, we established the content of parts 1 and 2 of the Main Theorem, and showed that, in fact, 1 and 2 are equivalent. In this article, we will prove the implication 
$$\textnormal{``Efficient constructibility''\,\,$\implies$\,\,1,2.}$$
We save for another day (i.e. \cite{me-2012-canonization}) the demonstrations that (1) a solution in the relational model of computation induces efficient constructibility in the sense of this article (immediate from the definitions), and (2) that the $K$-Construction Problem is in fact solvable by a relational Turing machine whenever $K$ is rosy (so, super-rosy of finite $U^\thorn$-rank

\begin{thm}
Let $\M_0$ be a finite structure, and let $K$ be the class of all finite models of $T = Th^k(\M_0)$. Assuming that $K$ has adequate amalgamation properties, let $T^{\lim}$ be the complete first-order theory of the direct limit of $K$. If $K$ is efficiently constructible, then $K$ is rosy (super-rosy of finite $U^\thorn$-rank).
\end{thm}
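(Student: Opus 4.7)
Given the equivalence of parts 1 and 2 of the Main Theorem (established in \cite{me-2011a}), it suffices to derive super-rosiness of $T^{\lim}$ from efficient constructibility, or equivalently to directly produce a strict thorn-independence relation on $K$. I plan to pursue the latter: treat the efficient constructor $F$ as a source of ``canonical generic extensions'' and distill from it an independence relation, call it $\mathop{\downarrow}$, on finite parameter sets in models of $T$, then invoke the machinery of \cite{me-2011a} to conclude.

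First, I would fix a precise formulation of efficient constructibility in the restricted computational model: $F$ takes an induced substructure $\M[A]$ of an implicit $\M\in K$ and returns some $\N\in K$ with $A\subseteq N$ and $\N[A]=\M[A]$, subject to prescribed locality and resource bounds inherited from the relational machine. Then, for finite $A,B,C$ contained in some $\M\in K$, I would define $A \mathop{\downarrow}_C B$ to mean that the results of running $F$ on $\M[A\cup C]$ and on $\M[B\cup C]$ can be coherently amalgamated into some $\N'\in K$ containing $A\cup B\cup C$ --- where ``coherently'' invokes the amalgamation property assumed in the hypotheses on $K$.

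I would then verify that $\mathop{\downarrow}$ satisfies the axioms of a strict independence relation. Invariance, monotonicity, transitivity, symmetry and extension should follow essentially formally from the fact that $F$'s behavior depends only on the isomorphism type of its input, not on any global information about the ambient $\M$. The subtler properties are finite/local character and anti-reflexivity, and here the quantitative content of ``efficient'' must be used: the bound on $F$'s resources should translate into a uniform bound on how much ``dependence'' a single element can accumulate over a growing parameter set, which in turn yields a finite bound on $U^\thorn$-rank. Once $\mathop{\downarrow}$ is shown to be a strict independence relation satisfying these axioms, the uniqueness of thorn-independence in rosy theories (cf.~\cite{me-2011a}) forces $\mathop{\downarrow}$ to agree with thorn-independence, whence $K$ is rosy.

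The main obstacle I anticipate is bridging the computational and model-theoretic sides in establishing local character. Efficient constructibility is phrased in terms of algorithms producing finite models from finite substructures, while local character concerns types over possibly infinite parameter sets in $T^{\lim}$. Getting across this divide will require careful use of the amalgamation hypotheses on $K$ to move coherently between finite runs of $F$ and types in $T^{\lim}$, and I expect the right notion of ``coherent amalgamation'' in the definition of $\mathop{\downarrow}$ to require several iterations to isolate. A secondary difficulty is ensuring anti-reflexivity without trivializing the relation, which will likely require extracting witnesses to dependence directly from individual runs of $F$ on configurations containing a ``diagonal'' copy of the element in question.
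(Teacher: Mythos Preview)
Your overall framework---extract an independence relation from the constructor and invoke the characterization in \cite{me-2011a}---matches the paper's. But the specific definition you propose and your assessment of where the difficulty lies are both off in ways that would block the argument.

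First, the difficulty is inverted. In the $\aleph_0$-categorical setting of $T^{\lim}$, Local Character is \emph{automatic} once the other axioms are in place; this is exactly what the quoted theorem from \cite{me-2011a} says (item~5: a notion of independence with Symmetry and Transitivity already suffices). The hard properties are Symmetry and full Transitivity. Your claim that ``transitivity, symmetry \dots\ should follow essentially formally from the fact that $F$'s behavior depends only on the isomorphism type of its input'' is the gap: isomorphism-invariance of $F$ yields Invariance of the relation, nothing more.

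Second, defining $A\mathop{\downarrow}_C B$ as ``the outputs of $F$ on $\M[A\cup C]$ and on $\M[B\cup C]$ can be coherently amalgamated'' is too coarse to be useful. Under the standing amalgamation hypotheses on $K$, any two such outputs \emph{can} be amalgamated over their common part, so on the face of it the relation is nearly trivial; and for any non-trivial reading of ``coherently'' there is no evident reason it should preserve algebraic dependence or admit Extension.

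The paper's route is considerably more structured. From a run of the program $(\Pi,F)$ on a finite $D$ one builds a finite directed acyclic \emph{construction graph} $CG[D]$ whose vertices record the tuples produced at each stage. ``Local separation'' $A\downarrow_C B/D$ is then defined via $d$-separation in $CG[D]$ over the hereditary descendants of $\acl(C')$ for all $C\subseteq C'\subseteq\acl(BC)$---precisely the quantification pattern in Adler's $\indep^{\textsc{m}}$. The actual independence relation $\dind$ is defined by a deviation device modeled on Adler's definition of $\thind$, with $\downarrow$ in place of $\indep^{\textsc{m}}$. Symmetry is then obtained via a bounded-local-weight lemma (Lemma~\ref{lemma:bdd-local-wt}), which rests on the uniform definability of the construction graphs in $\MM^{\eq}$ (Observation~\ref{obs:def-of-CGs}); that definability is where the IFP/``efficient'' hypothesis is genuinely used. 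None of this machinery---construction graphs, $d$-separation, the Adler-style deviation, or the definability-to-bounded-weight step---is present in your outline, and without it there is no visible mechanism to obtain Symmetry.
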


%%%%%%%%%%%%%%%%%%%%%%%%%%%%%%%%%%%%%%%%%%%%%%%%%%%%%%%%%%%%%%%%%%%%%%%%
%%%%%%%%%%%%%%%%%%%%%%%%%%%%%%%%%%%%%%%%%%%%%%%%%%%%%%%%%%%%%%%%%%%%%%%%
%%%%%%%%%%%%%%%%%%%%%%%%%%%%%%%%%%%%%%%%%%%%%%%%%%%%%%%%%%%%%%%%%%%%%%%%
%%%%%%%%%%%%%%%%%%%%%%%%%%%%%%%%%%%%%%%%%%%%%%%%%%%%%%%%%%%%%%%%%%%%%%%%
%%%%%%%%%%%%%%%%%%%%%%%%%%%%%%%%%%%%%%%%%%%%%%%%%%%%%%%%%%%%%%%%%%%%%%%%
\section{Background and the Main Setting}

The contents of subsections \ref{subsec-start} and \ref{subsec-end} is taken \emph{verbatim} from the companion article, \cite{me-2011a}. Naturally, we require the settings to be identical. In the last subsection, we recall the necessary facts about \th-independence and rosiness that were established in \cite{me-2011a}.

%%%%%%%%%%%%%%%%%%%%%%%%%%%%%%%%%%%%%%%%%%%%%%%%%%%%%%%%%%%%%%%%%%%%%%%%
%%%%%%%%%%%%%%%%%%%%%%%%%%%%%%%%%%%%%%%%%%%%%%%%%%%%%%%%%%%%%%%%%%%%%%%%
\subsection{Finite-variable Logics}\label{subsec-start}

Finite-variable fragments of first-order logic, $L^k$, were formulated by many authors independently (e.g. \cite{poizat-1982}, but our main references have been \cite{otto-1997} and \cite{libkin-FMTbook-2004}). The importance of $L^k$ and its infinitary extension $L^k_{\infty,\omega}$ in finite-model theory is difficult to overstate. For our purposes, $L^k$ is satisfying because a ``complete'' $L^k$-theory -- that is, complete for $L^k$-sentences -- can have many non-isomorphic finite models, which is surely a prerequisite for bringing classical model-theoretic ideas to bear in finite-model theory.

%%%%%%%--------------------------------------------------------------------------------------------------------------------------------%%%%%%%
\begin{defn}
Let $\rho$ be a finite relational signature. Assume $k\geq\ari(\rho)= \max \left\{\ari(R):R\in\rho\right\}$ and $k\geq 2$.
\begin{enumerate}
\item Fix a set $X = \{x_1,...,x_k\}$ of exactly $k$ distinct variables. Then, $L^X = L^X_\rho$ is the fragment of the first-order logic $L=L_\rho$ keeping only those formulas all of whose variables, \emph{free or bound}, come from $X$. If $V = \{x_0,x_1,...,x_n,...\}$ is the infinite set of first-order variables understood in the construction of the full first-order logic, then $L^k =\bigcup\left\{L^X:X\in{V\choose k}\right\}$, where ${V\choose k}$ is the set of $k$-element subsets of $V$.

As usual, we write $\phi(x_1,...,x_k)$ to mean that the set of free variables of $\phi$ is a subset of $\{x_1,...,x_k\}$, but not necessarily identical to it.

\item For a $\rho$-structure $\M$, the $k$-variable theory of $\M$, denoted $Th^k(\M)$ is the set of sentences $\phi$ of $L^k$ such that $\M\models\phi$. Note that $Th^k(\M)$ is complete with respect to $k$-variable sentences in that either $\phi\in Th^k(\M)$ or $\neg\phi\in Th^k(\M)$ for every $k$-variable sentence $\phi$.

\item For a $k$-tuple $\aa\in M^k$, we set
$tp^k(\aa;\M) = \left\{\phi(x_1,...,x_k)\in L^k:\M\models\phi(\aa)\right\}$
and if $T = Th^k(\M)$, then $S^k_k(T) = \left\{tp^k(\aa;\M):\aa\in M^k\right\}$. 

It can be shown -- in a number of ways -- that for a complete $L^k$-theory $T$, $T$ has a finite model only if $S^k_k(T)$ is finite. All of those methods also show that $S^k_k(T)$ is an invariant of $T = Th^k(\M)$ rather than $\M$ itself -- that is, if $Th^k(\N) = T$ for some other $\rho$-structure $\N$ (equivalently, if $\N\equiv^k\M$), then 
$\left\{tp^k(\bb;\N):\bb\in N^k\right\}= S^k_k(T)$, too. Finally, it can also be shown that if $\M$ is finite, then $Th^k(\M)$ is finitely axiomatizable, and in fact, the mapping $\M\mapsto Th^k(\M)$ is computable in $\textrm{Rel-}$\textsc{Ptime} (see \cite{abiteboul-vianu-1995}). This latter fact is the basis for our notion of efficient constructibility.

\item Let $\M$ be a $\varrho$-structure, and let $B\subseteq M$. Then for $X\in{V\choose k}$ as above and $e:X\to B\cup X$, we define $L^X(e)$ to be the set,
$$\left\{\phi(e(x_1),...,e(x_n)): \phi(x_1,...,x_n)\in L^X\right\}.$$
Then, 
$$L^k(B) = \bigcup_X\bigcup_{e:X\to B\cup X}L^X(e),$$
where, again, $X$ ranges over ${V\choose k}$. In particular, $L^k(B)$ \emph{is not} obtained by adding $B$ as collection of constant symbols to the underlying signature. 
Finally, we define
$$\diag^k(B;\M) =\left\{\phi\in L^k(B):\M\models\phi,\textrm{ $\phi$ has no free variables}\right\}.$$
We note that the object $\diag^k(B;\M)$ \emph{does not carry} the whole of $\M$ with it; this observation is crucial in understanding why the Strong $L^k$-Canonization Problem (below) is non-trivial.

\end{enumerate}
\end{defn}
Having defined the $k$-variable logic, we  define  the {\em $L^k$-Canonization Problem} by specifying what would amount to its solution. Thus, an {\em $L^k$-Canonization operator} (in the signature $\rho$) is a polynomial-time computable mapping $F:\fin[\rho]\to\fin[\rho]$ such that,
\begin{enumerate}
\item $F(\M)\equiv^k\M$ for all $\M\in\fin[\rho]$.
\item If $\M\equiv^k\N$, then $F(\M) = F(\N)$.
\end{enumerate}
(Here, $\fin[\rho]$ denotes the class of all finite $\rho$-structures.) The map $F$ selects a representative of each $\equiv^k$-class,  a ``canonical'' finite model of each complete $k$-variable theory that does actually have finite models.\footnote{In particular, $F$ is supposed to be invariant with respect re-encodings of a given structure. In the model of computation that we will work with in these articles, which works directly with unencoded structures, this issue will actually disappear entirely.} For $k\geq 3$, this is entirely impossible \cite{Grohe97largefinite}, so the natural move is to ask for canonization operators $F:K\to\fin[\rho]$ for sub-classes $K\subsetneq\fin[\rho]$. As stated, the operator $F$ really acts on theories, not structures, and this seems to be a serious impediment to studying the question from the point of view of ``geometric'' model theory; we therefore move again to a related problem which yields a better ``grip'' on the model theory of each $L^k$-theory.
Here, we call this the Strong $L^k$-Canonization Problem:
\begin{quote}
\textsc{Given} $\diag^k(A;\M)$ for some (implicit) $\M\in\fin[\rho]$ and $A\subseteq M$,\\
\textsc{Return} a model $\M'\models Th^k(\M)$ such that $A\subseteq M$ and $\diag^k(A;\M') = \diag^k(A;\M)$.
\end{quote}
(As usual, $\diag^k(A;\M)$, the $L^k$-elementary diagram of $A$ in $\M$, actually contains $Th^k(\M)$, as the sentences of $Th^k(\M)$ are precisely the $0$-ary formulas satisfied in $\M$ by $0$-tuples from $A$.) Of course, Strong $L^k$-Canonization initially appears to be a significantly harder problem than the original $L^k$-Canonization Problem, but we note that previous positive results on the $L^k$-Canonization Problem for $k=2$ and over restricted classes of $L^k$-theories have always yielded solutions of the Strong $L^k$-Canonization Problem with no additional toil -- see \cite{koponen-2001} and \cite{otto-1997b}.
We derive another advantage from the move to Strong $L^k$-Canonization in that it can be sensibly considered around a single fixed $L^k$-theory without trivializing the problem\footnote{There is an asymmetry in fixating on a single $L^k$-theory $T$. If we were initially studying classes $K\subseteq\fin[\rho]$ over which (Strong) $L^k$-Canonization has a solution, there is an implicit expectation of uniformity in that we would have expected a single algorithm to serve as an $L^k$-Canonization operator over the entire class. By fixing a single theory $T$, we allow ourselves to exploit additional properties of it that might be hard or impossible to read-off from its a finite presentation.}.

%%%%%%%%%%%%%%%%%%%%%%%%%%%%%%%%%%%%%%%%%%%%%%%%%%%%%%%%%%%%%%%%%%%%%%%%
%%%%%%%%%%%%%%%%%%%%%%%%%%%%%%%%%%%%%%%%%%%%%%%%%%%%%%%%%%%%%%%%%%%%%%%%
\subsection{Main Setting}\label{subsec-end}

The context for the the Main Theorem (for this and the two companion articles) is the following. Let $\MM_0$ be a countable $\aleph_0$-categorical structure in a finite relational signature $\rho$. We  assume that $Th(\MM_0)$ has the finite sub-model property in the following strong sense:
\begin{quote}
For every sentence $\phi$ and every finite $A_0\subset\MM_0$, $\phi\in Th(\MM_0)$ if and only if there is an algebraically closed finite $A\subset\MM_0$ such that $A_0\subseteq A$ and $A\models \phi$.
\end{quote}
Under this assumption, the $k$-variable theory $T = Th^k(\MM_0)$ has finite models of all sizes. We will also make two additional universality assumptions about $\MM_0$ with respect to finite models of $T$.
\begin{enumerate}
\item[U1.] For $B\subseteq\MM_0$, define 
$$\cl_{+1}^k(B) = B\cup \left\{a\in \MM_0: \diag^k(B;\MM_0)\models \textrm{$\tp^k(a/B)$ is algebraic}\right\},$$
$$\cl_0^k(B) = B,\,\,\cl_{n+1}^k(B) = \cl_{+1}^k(\cl^k_n(B)),\,\,\cl^k(B) = \bigcup_n\cl^k_n(B).$$
We require that for all $B\subset_\fin\MM_0$, $\acl(B) = \cl^k(B)$ and, if $B =\acl(B)$, then $tp(B)\equiv tp^k(B)\cup Th(\MM_0)$.
\item[U2.] $T\models Th_\forall(\MM_0)$.

This assumption has two invaluable consequences:
\begin{enumerate}
\item For every sufficiently large finite model $\A\models T$, there is an $L^k$-elementary embedding $\A\to\MM_0$.
\item For large enough finite $\B\models T$, for any $A\subseteq B$ such that $A = \cl^k(A)$, and any $f_0:A\to\MM_0$ -- a partial $L^k$-elementary map -- there is an $L^k$-elementary embedding $f:\B\to\MM_0$ extending $f_0$.
\end{enumerate}
\end{enumerate}

\noindent From all of these assumptions and appealing to \cite{baldwin-lessmann-2002} (Lemma 19 of that article, combined with the assumption on algebraic closures above), we may associate with $T$ a direct limit -- a countable structure  $\MM$ with the following properties:
\begin{enumerate}
\item[C1.] $\MM$ also satisfies the universality conditions expressed above.
\item[C2.] For any finite tuple $\aa$ from $\MM$, $\tp(\aa) \equiv \tp^k(\aa)\cup Th(\MM)$.
\item[C3.] Let $K_0$ be a set of representatives of every isomorphism-type of finite $L^k$-elementary sub-models of $\MM_0$. Then for any non-principal ultrafilter $\Psi$ on $K_0$, $\MM\equiv\Pi K_0/\Psi$.
\end{enumerate}
Abusing the terminology of \cite{baldwin-lessmann-2002} slightly, we call the theory $T^{\lim} = Th(\MM)$ the {\em canonical completion} of $T$. 

\begin{defn}
Let $K$ denote the class of (up to isomorphism) finite $L^k$-elementary sub-models of $\MM$ quasi-ordered by the $L^k$-elementary substructure relation. That is to say, starting with the set of finite $L^k$-elementary sub-models of $\MM_0$, we close under isomorphisms and obtain an object $(K,\preceq^k)$ that is much like an abstract elementary class (AEC). By definition, the model theory of the $k$-variable theory $T$ is, effectively, just the model theory of the class $(K,\preceq^k).$\footnote{In fact, if $T$ is any $k$-variable theory whose \emph{finite models} have the Joint-Embedding and Amalgamation Properties indicated by the two consequences of U2, it is still possible to generate a limit model $\MM$ as above; given $\MM$, one can then go through this entire process with $\MM_0=\MM$. Thus, we can also view the model theory of $Th(\MM)$ as a by-product of the model theory of $T$ or of $(K,\preceq^k)$. This is the approach taken in \cite{baldwin-lessmann-2002}.}
For $\M\in K$ and $C\subseteq M$, we define,
$$K_{(C;\M)} = \left\{\N\in K: C\subseteq N,\,\diag^k(C;\N) = \diag^k(C;\M)\right\}.$$
Writing $K_{(C;\M)}$ over and over again is rather cumbersome, so we often write $K_C$ -- taking $\M$ to be clear from context or assuming that a $k$-variable diagram for $C$ is fixed. Also, we will often identify the pair $(C;\M)$ both with the set $C$ and the diagram $\diag^k(C;\M)$.
\end{defn}

\begin{obs}\label{obs:almost-EI}
There is a finite set $\E$ of 0-definable equivalence relations such that for any other 0-definable equivalence relation $E(\yy_1,\yy_2)$, there is a boolean combination $\phi(\yy_1,\yy_2)$ of members of $\E$ such that $T^{\lim}\models E(\yy_1,\yy_2)\bic \phi(\yy_1,\yy_2)$. In particular, let $\MM_\E$ be the reduct of $\MM^\eq$ consisting of the home-sort and the finitely many imaginary sorts corresponding to equivalence relations in $\E$; then $Th(\MM_\E)$ eliminates imaginaries.

Moreover, each equivalence relation $E$ in $\E$ is expressible as boolean combination of formulas of $L^k$. Thus, for $\A\preceq^k\MM$ and $\aa,\bb\in A$, $\A\models E(\aa,\bb)$ if and only if $\MM\models E(\aa,\bb)$; this is true even when $\A$ is a finite substructure. Thus, each finite $L^k$-elementary substructure $\A$ of $\MM$, naturally expands to a substructure $\A_\E$ of $\MM_\E$ carrying all of the relevant information from $\MM^\eq$. In this way, we also obtain an expansion of the class $(K,\preceq^k)$ (see below) to $K_\E$, and where the ambiguity is not dangerous, we will identify $K$ and $K_\E$.
\end{obs}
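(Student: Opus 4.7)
The plan is to derive the observation from $\aleph_0$-categoricity of $T^{\lim}$ together with property C2 of the Main Setting. I would first show that $T^{\lim}$ is $\aleph_0$-categorical: by C2, complete $n$-types in $T^{\lim}$ are determined by their $L^k$-restrictions---equivalently, by the $L^k$-types of all $\leq k$-sub-tuples---and since $T = Th^k(\MM_0)$ has finite models, $S^k_k(T)$ is finite. An induction on $n$ then shows that $T^{\lim}$ has only finitely many complete $n$-types for each $n$, so Ryll-Nardzewski yields $\aleph_0$-categoricity.

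Next I would define $\E$ and verify the finite-generation claim. By $\aleph_0$-categoricity, for each $n \leq k$ there are only finitely many $0$-definable equivalence relations on $n$-tuples; I take $\E$ to be the finite union of these families. In particular, $\E$ includes the ``same complete $L^k$-type'' equivalence relation on $n$-tuples for each $n \leq k$, as well as every $2$-class equivalence relation $F_A$ partitioning $\leq k$-tuples according to membership in some $0$-definable $A$. Given an arbitrary $0$-definable equivalence relation $E(\yy_1,\yy_2)$ on $m$-tuples, apply C2 to rewrite $E$ as a Boolean combination of $L^k$-formulas on $\leq k$-sub-tuples of $(\yy_1,\yy_2)$. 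Using the membership-indicator and type-separating equivalence relations in $\E$, together with the reflexivity/symmetry structure of $E$, one reassembles this decomposition into a Boolean combination of atomic formulas $F(\zz_1,\zz_2)$ with $F \in \E$.

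The $L^k$-definability of members of $\E$ is then immediate: each $E \in \E$ is a $0$-definable relation on $\leq k$-tuples, and by C2 every such relation is a Boolean combination of $L^k$-formulas. Consequently, for $\A \preceq^k \MM$ and $\aa,\bb \in A$ of the correct arity, the truth of $E(\aa,\bb)$ in $\A$ agrees with that in $\MM$, since $L^k$-truth is preserved by $L^k$-elementary substructures---and this applies already to finite $\A$. Elimination of imaginaries in $\MM_\E$ now follows: every $0$-definable equivalence relation on home-sort tuples is a Boolean combination of $\E$-relations by the preceding step, so its equivalence classes are $0$-definably coded by tuples of elements from the $\E$-sorts of $\MM_\E$, yielding the required $0$-definable injection from each imaginary into $\MM_\E$; equivalence relations on $\E$-sort tuples reduce to the home-sort case via the canonical projections onto each $\E$-sort.

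The main obstacle is the finite-generation step of the second paragraph: converting arbitrary $L^k$-formulas (which define subsets, not equivalence relations) into Boolean combinations of equivalence-relation atoms $F(\zz_1,\zz_2)$ demands that $\E$ be fine enough to ``see'' every $0$-definable subset of $M^{\leq k}$, and forces us to exploit the equivalence-relation structure of the target $E$ to avoid external reference parameters that would otherwise be needed to pin down specific type-classes. This is where the combination of $\aleph_0$-categoricity (delivering finiteness) and C2 (delivering $L^k$-compatibility) must be orchestrated carefully.
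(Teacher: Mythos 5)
Your outline is sensible in its first steps, but it has a genuine gap at the core of the argument, one you yourself flag in the final paragraph. Establishing $\aleph_0$-categoricity of $T^{\lim}$ from C2 together with the finiteness of $S^k_k(T)$ is fine, and taking $\E$ to be the finitely many $0$-definable equivalence relations on $\leq k$-tuples is a reasonable candidate set. But the step where you ``reassemble'' the $L^k$-decomposition of $E(\yy_1,\yy_2)$ into a Boolean combination of equivalence-relation atoms $F(\zz_1,\zz_2)$ is never actually carried out, and it is not routine. An individual $L^k$-formula $\psi(\zz)$ applied to a sub-tuple of $\yy_1\yy_2$ does not sit inside any single atom $F(\zz_1,\zz_2)$ with $F$ an equivalence relation: an equivalence relation only lets you \emph{compare} two tuples, and there is no parameter-free reference tuple available to compare against. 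You gesture at exploiting the reflexivity/symmetry/transitivity of $E$, but you would need to show concretely how the ``mixed'' $k$-sub-tuples of $\yy_1\yy_2$ --- those drawing some coordinates from $\yy_1$ and some from $\yy_2$, which carry information beyond the $L^k$-types of $\yy_1$ and $\yy_2$ separately --- can be absorbed into comparisons of matching sub-tuples of $\yy_1$ against sub-tuples of $\yy_2$. That is the entire content of the first assertion, and it is the point where the orchestration of $\aleph_0$-categoricity with C2 actually has to happen; a plan that labels this an ``obstacle'' is not yet a proof.

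There is also a second, smaller gap in the elimination-of-imaginaries paragraph. You pass from ``$E$ is a Boolean combination of atoms $F(\zz_1^{(i)},\zz_2^{(i)})$ with $\zz_j^{(i)}$ a sub-tuple of $\yy_j$'' to ``the $E$-class of $\aa$ is coded by the tuple of $\E$-sort images of sub-tuples of $\aa$.'' That inference is clean when the Boolean combination is a conjunction (then $\aa/E$ really is determined by $(\aa'^{(i)}/F_i)_i$), but once disjunctions or negations enter, the $E$-class of $\aa$ need not be a function of the $F_i$-classes of its sub-tuples, so the $0$-definable coding you assert has to be produced, not merely declared. Either you need to argue that the Boolean combination can always be taken to be a positive conjunction, or you need a different route from the finite-generation claim to elimination of imaginaries. (For what it is worth, the paper itself states this as an unproved observation, so there is no official proof to compare against; but as a freestanding argument yours stops short of the two points above.)
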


In model theory, it is a common practice it is a common practice to Morley-ize a theory under study -- adding relation symbols for every 0-definable relation so that the theory automatically has quantifier-elimination. Indeed, subsequently taking the reduct to the language consisting of only the new relation symbols still does not change anything about the theory essentially. Here, we implicitly take a similar approach, working in a language in which the $k$-variable $k$-types are named as relation symbols as are the projection maps to the finitely-many needed imaginary sorts. As a point of interest, while this move is already justified in our Main Setting, it is possible to show that a ``capped'' $L^k$-theory -- in which every diagram of a finite subset of a model extends to a finite model -- \emph{automatically} falls into our Main Setting. This is discussed in Appendix \ref{app1} below.

%Classes of the form $(K,\preceq^k)$ are not the only classes of finite structures that have an intelligible model theory. In Section \ref{sec:super-robust}, we will discuss generalizing some of the results of this article to more general kinds of classes $(K,\preceq_K)$ of finite structures -- namely, to ``super-robust'' classes. (In fact, the only point of significant divergence is in the analysis of super-rosiness for super-robust classes; there, only localized versions of our results hold.)
%
%In any case, the Strong $L^k$-Canonization Problem \emph{for a single $L^k$-theory $T$} turns out to be a special case of the following $K$-Construction Problem for a class $(K,\preceq_K)$ of finite structures (here, $\diag_K(A;\M)$ denotes an adequate notion of ``$K$-strong diagram'' for $A\subseteq M$, $\M\in K$):
%\begin{quote}
%\textbf{$K$-Construction} Problem:
%
%\textsc{Given} $\diag_K(A;\M)$ for some (implicit) $\M\in K$ and $A\subseteq M$ 
%
%\textsc{Return} $\N\in K$ such that $A\subseteq N$ and $\diag_K(A;\N)=\diag_K(A;\M)$. 
%\end{quote}
%It is this algorithmic problem that we will analyze in detail, under a certain restricted model of computation, in the two companion articles \cite{me-2011b, me-2012-canonization}.
%

%%%%%%%%%%%%%%%%%%%%%%%%%%%%%%%%%%%%%%%%%%%%%%%%%%%%%%%%%%%%%%%%
%%%%%%%%%%%%%%%%%%%%%%%%%%%%%%%%%%%%%%%%%%%%%%%%%%%%%%%%%%%%%%%%
\subsection{Rosiness, model-theoretic independence relations and geometric elimination of imaginaries}

To conclude this section, we recall some definitions and facts from \cite{me-2011a} about super/rosiness of $L^k$-theories and their limit theories consistent with Main Setting. We recall, in particular, we essentially defined the phrase, ``$K$ is rosy,'' for $K$ the class of finite models of $T$,  to mean, ``$T^{\lim}$ is rosy'' (this is in contrast to the developments in \cite{hyttinen-2000} and \cite{koponen-2006}). For the sake of brevity, then, we make several definitions in the context of the limit theory and its models, but we don't mention their translations into the context of the underlying class $K$ itself.

\begin{defn}
[Model-theoretic independence relation]
Let $T$, $K$ and $T^{\lim}$ be as in the Main Setting, and let $\MM\,(=\MM^\eq)$ be a countable model of $T^{\lim}$. An independence relation $\oind$ is a subset of $\left\{ (A,B,C):A,B,C\subset_\fin \MM\right\}$ satisfying the following for all $A,B,C\subset_\fin \MM$:
\begin{enumerate}
\item[] Invariance: If  $\sigma\in Aut(\MM)$, then $A\oind_CB$ iff $\sigma[A]\oind_{\sigma[C]}\sigma[B]$.
\item[] Preservation of algebraic dependence: If $A\oind_CB$, then $\acl(AC)\cap \acl(BC)=\acl(C)$.
\item[] Existence: $A\oind_CC$.
\item[] Extension: If $A\oind_CB$ and $BC\subseteq D\subset_\fin M$, then there is an $A'\equiv_{BC}A$ such that $A'\oind_CD$.
\item[] Symmetry: $A\oind_CB$ if and only if $B\oind_CA$
\item[] Transitvity: If $C_0\subseteq C\subseteq B$, then $A\oind_{C_0}B$ iff $A\oind_{C_0}C\,\&\,A\oind_CB$.
\item[] Local character: For every finite family $S = \{S_0,...,S_{n-1}\}$ of sorts of $\M^\eq$, there is a function $g_S:\omega\to\omega$ such that if $A,B\subseteq \bigcup_iS_i^{\M^\eq}$, then there is a subset $B_0\subseteq B$ such that $|B_0|\leq g_S(|A|)$ and $A\oind_{B_0}B$.
\end{enumerate}
The relation $\oind$ is a a notion of independence if it satisfies Invariance, Preservation of algebraic dependence, Extension, and,
\begin{enumerate}
\item[] Partial right-transtivity: If $C_0\subseteq C\subseteq B$, then $A\oind_{C_0}B$ implies $A\oind_{C_0}C\,\&\,A\oind_CB$.
\end{enumerate}
but not necessarily Symmetry, (full) Transitivity or Local character.
\end{defn}
In \cite{me-2011a}, we showed that for $T$, $T^{\lim}$ as in the Main Setting the restriction to triples of finite sets in the definition of an independence relation is fully consistent with the usual definitions for independence relations. Further, we found that in this setting, there is no actual need to prove Local Character -- it follows from the other axioms in this strong (``super'') way. In the next sections, we will build, from the assumption of ``efficient construbility,'' a notion of independence $\dind$ which is symmetric and transitive. From this, it will follow that $T$ is rosy. The key theorem from \cite{me-2011a} that we need here is the following; it is built upon on similar theorems of \cite{ealy-onshuus-2007} and \cite{adler-2007}:

\begin{thm}[Rosy = ``has an independence relation'']
Let $T$, $T^{\lim}$, and $K$ be as in the Main Setting. The following are equivalent:
\begin{enumerate}
\item $K$ is rosy.
\item $K$ admits an independence relation (without reference to Local Character).
\item $T^{\lim}$ is rosy.
\item $T^{\lim}$ admits an independence relation.
\item $T^{\lim}$ admits a notion of independence with Symmetry and Transitivity.
\item $T^{\lim}$ admits a notion of independence with Local character.
\end{enumerate}
(In either of the last two cases, that notion of independence is, then, an independence relation.)
\end{thm}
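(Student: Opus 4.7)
The plan is to establish the six equivalences by first closing the cycle (3) $\Leftrightarrow$ (4) $\Leftrightarrow$ (5) $\Leftrightarrow$ (6) at the level of $T^{\lim}$ — where one is essentially reprising \cite{ealy-onshuus-2007} and \cite{adler-2007} — and then separately bridging to the finite-class formulations (1) and (2) using the Main Setting, especially C2 and C3. For (3) $\Rightarrow$ (4) I would use the classical fact that in a rosy $T^{\lim}$, þ-forking independence satisfies all of the axioms required of an independence relation. The implications (4) $\Rightarrow$ (5) and (4) $\Rightarrow$ (6) are immediate from the definitions. For the return trip, one follows Adler's strategy: a notion of independence $\dind$ with Symmetry and Transitivity can be compared dimension-theoretically to þ-dividing, with the upshot that any such $\dind$ is contained in, and forces symmetric behavior of, þ-forking, which is essentially the definition of rosiness. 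For (6) $\Rightarrow$ (3), local character on a notion of independence likewise forces a finite bound on chains of þ-forking extensions, yielding rosiness directly.

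The more setting-specific half is (1) $\Leftrightarrow$ (3) and (2) $\Leftrightarrow$ (4), which uses the fact that $K$ and $T^{\lim}$ encode the same model theory. By C2, every $n$-type in $\MM$ is determined by its $k$-variable part, and by C3, $\MM \equiv \prod K_0/\Psi$ for any non-principal ultrafilter $\Psi$. Given an independence relation $\oind$ on triples of finite subsets of members of $K$, I would transport it through the ultraproduct: declare $A \oind^{\MM}_C B$ on finite subsets of $\MM$ if and only if, on a $\Psi$-measure-one set of indices $\A \in K_0$, the pulled-back finite subsets of $\A$ satisfy the $K$-level relation. Łoś's theorem together with C2 ensures that Invariance, Preservation of algebraic dependence, Partial right-transitivity, and Extension all transfer to the limit; Symmetry and Local Character descend when assumed. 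In the other direction, one restricts an independence relation on finite subsets of $\MM$ to triples that happen to sit inside some finite $L^k$-elementary substructure, using the amalgamation consequence of U2 (b) to verify Extension \emph{within} $K$.

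The main obstacle I expect is verifying the Extension axiom across the passage between $K$ and $T^{\lim}$. In the limit, Extension is a compactness-type assertion; in $K$ it demands that, after asserting $A' \equiv_{BC} A$ with $A' \oind_C D$, one can actually realize $A'$ inside \emph{some} finite $L^k$-elementary substructure extending a given one. This is exactly what the strong finite-sub-model property together with U2 provides, but the uniformity of the choice of witness — specifically, the fact that any $L^k$-elementary map $f_0 : A \to \MM_0$ extends to an $L^k$-elementary embedding of a chosen finite $\B$ containing $A$ — has to be invoked carefully so that the transported $\oind^{\MM}$ still satisfies Extension in the sense of the definition. Once Extension is settled, Invariance, Preservation of algebraic dependence, and (Partial right-)Transitivity are routine from Łoś, and the cycle closes. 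With the equivalences (3)–(6) in hand and the $K$/$T^{\lim}$ bridges established, items (1)–(6) are interderivable, giving the theorem.
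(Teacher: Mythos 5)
The paper does not prove this theorem; it imports it from the companion article \cite{me-2011a} and only sketches the outline (``built upon similar theorems of \cite{ealy-onshuus-2007} and \cite{adler-2007}''). But the surrounding text makes one thing explicit that your proposal misses entirely, and it is the crux: the bridge between $K$ and $T^{\lim}$ that you devote most of your effort to is \emph{definitional}, not a theorem. The paper states outright that ``we essentially defined the phrase, `$K$ is rosy,' \ldots to mean, `$T^{\lim}$ is rosy','' and, more tellingly, the definition of an independence relation in this article is already given on triples $(A,B,C)$ of finite subsets of a countable model $\MM\models T^{\lim}$ -- for \emph{both} readings of the phrase, ``$K$ admits an independence relation'' and ``$T^{\lim}$ admits an independence relation.'' So $(1)\iff(3)$ and $(2)\iff(4)$ carry essentially no content; your ultraproduct/\L{}o\'s transport argument through $\prod K_0/\Psi$ is solving a problem the paper has arranged not to have, and the worry about realizing $A'$ inside a finite $L^k$-elementary substructure is a symptom of that misreading (Extension is only ever asserted for finite subsets of $\MM$, not ``inside'' a member of $K$). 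If you did want to push the transport argument anyway, note that Extension is an $\exists$-quantified axiom whose witness would need to be chosen $\Psi$-uniformly across the fibers; a bare \L{}o\'s argument as written does not give that.

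You also omit the one point the paper does emphasize as setting-specific: that in the Main Setting, Local Character follows automatically (``super'') from the other axioms. This is exactly why item (2) is stated ``without reference to Local Character'' and why items (5) and (6) -- which trade Local Character for Symmetry-plus-Transitivity and vice versa -- both close back to (4). Your sketch of the $(3)\iff(4)\iff(5)\iff(6)$ cycle via Ealy--Onshuus and Adler is the right shape and does match the paper's indicated references, but the absence of the automatic-Local-Character observation means your account of why (2) and (5) and (6) each suffice on their own is incomplete.
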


\section{Definition of `Efficient constructibility'}\label{sec:def-of-EC}

As  noted in the introduction, our remaining goal for this article is to show that ``efficiently constructible'' classes $K$ (arising as in the Main Setting along with $T$, $T^{\lim}$) are rosy by constructing a notion of independence with Symmetry and Transitivity. For this, of course, we will need to make several definitions leading up to the definition of efficient constructibility. 

We should also apologize to the reader for our use of the word ``efficient'' here. It is well-known to finite-model theorists that the logic FO+IFP -- first-order logic extended with the inflationary fixed-point operator -- captures \emph{relational}-\texttt{PTime} (see \cite{abiteboul-vianu-1995}); thus, the central role of the IFP-operator in our definition already suggests that the word ``efficient'' is more-or-less appropriate in that sense. 

From now on, we fix $T$, $K$ and $T^{\lim}$ as in the Main Setting with a fixed finite relational signature $\rho$, and $\MM$ always denotes a countable model of $T^{\lim}$. We define 
$$\acl[\MM] = \left\{\acl(A): A\subset_\fin \MM\right\}$$
which is a family of finite sets. In this presentation, we will not make any real use of Morley-ization of the langauge in the sense of Appendix \ref{app1}.

%
%%%%%%----------------------------------------------------------------------------------------------------------------------------------------------------------------------------------------------%%%%%
%\begin{defn}
%An $s$-independent sequence of $\M\models T$ is a tuple $\aa = (a_0,...,a_{n-1})$, $n<\omega$, such that $a_0\notin \acl^\M(\emptyset)$ and for every $0<t< n$, $a_t\notin \acl^\M(a_0,...,a_{t-1})$. Let $sI[\M]$ denote the set of all $s$-independent sequences of $\M$.
%\end{defn}
%

%%%%%----------------------------------------------------------------------------------------------------------------------------------------------------------------------------------------------%%%%%
\begin{defn}[Inflationary fixed-points of expanded formulas]
Let $X^{(r)}$ be a new relation symbol, not in $\varrho$. Then a formula, say $\psi(\xx) = \psi(x_0,...,x_{r-1};X)$, of $L[\varrho\cup\{X\}]$ will be called a proper expanded formula. For $A\in\acl[\MM]$, we define a subset $\psi^\infty[A]$ of $A^r$ as follows:
\begin{itemize}
\item $\psi^0[A] = \emptyset$
\item Given $\psi^t[A]\subseteq A^r$, define
$$\psi^{t+1}[A] = \psi^t[A]\cup\left\{\aa\in A^r:(A,\psi^t[A])\models \psi(\aa)\right\}$$
where $(A,\psi^t[A])$ is the $\varrho\cup\{X\}$-expansion of $A$ in which $X$ is interpreted is $\psi^t[A]$.
\item $\psi^\infty[A] = \bigcup_{t<\omega}\psi^t[A] = \psi^{|A|^{O(1)}}[A]$
\end{itemize}
Note that that for every $t<\omega$, the set $\psi^t[A]$ and $\psi^{t+1}[A]\setminus\psi^t[A]$ are first-order $A$-definable sets. We also note that satisfaction, here, is evaluated with respect to the induced substructure $A$, rather than with respect to $\MM$.
\end{defn}

%%%%%----------------------------------------------------------------------------------------------------------------------------------------------------------------------------------------------%%%%%
\begin{defn}
A {\em pre-program for $K$} will consist of the  data,
$$\Pi = (\varrho^{test},\varrho^+,\Theta = \{\theta_i\}_{i<N_0},\Phi = \{\phi_i\}_{i<N},\Sigma = \{\psi_\sigma,\xi_\sigma\}_{\sigma:\Phi\to2},\yy = (y_0,...,y_{r-1}))$$
as follows:
\begin{itemize}
\item $\varrho^{test} = \varrho\cup\{X_0^{(r)},...,X_{N_0-1}^{(r)}\}$ is an expansion of $\varrho$ and for each $i<N$,  $\theta_i(x_0,...,x_{r-1};X_i^{(r)})$  is a proper expanded  $\exists$-formula of $L[\varrho\cup\{X_i\}]$.
\item  $\Phi = \left\{\phi_0,...,\phi_{N-1}\right\}$ is a family of sentences of $L[\varrho^{test}]$.
\item  $\varrho^+ = \varrho\cup\{Y^{(r)}\}$ is another expansion of $\varrho$; for each map $\sigma:\Phi\to2$,  $\psi_\sigma(x_0,...,x_{r-1})$ is a proper expanded $\exists$-formula of $L[\varrho^+]$;  $\xi_\sigma(\yy,z_0,...z_{m-1})$ is an arbitrary formula of $L[\varrho^+]$.
\end{itemize}
For $A\subset_\fin \MM$, we write $A^{test}$ for the expansion $(A,\theta_0^\infty[A],...,\theta_{N_0-1}^\infty[A])$, and we write $A^\sigma$ for the expansion $(A,\psi_\sigma^\infty[A])$.

Suppose $A = \acl(A)\subset_\fin \MM$. Write $\sigma_A$ for the function $\Phi\to2$ given by 
$$\sigma_A(\phi_i) = 1\iff A^{test}\models \phi_i$$
for each $i<N$. We say that $A$ {\em requests attention at $(a_0,...,a_{r-1})\in A^r$} just in case,
$$\aa\in \psi_{\sigma_A}^\infty[A]\textrm{ and }A^{\sigma_A}\models \neg\exists \yy\,\xi_{\sigma_A}(\yy,a_{0},...,a_{r-1}).$$
If $A$ does not request attention at any $\aa\in A^r$, then we say that $A$ is $\Pi$-complete. Finally, $\Pi$ is genuinely a pre-program for $\K$ if  for any $A\in\acl[\MM]$
$$\textrm{$A$ is $\Pi$-complete iff $A\models T$ (iff $A\in K$)}.$$
\end{defn}
%
%
%Suppose $A=\acl^\M(A)\subset_\fin M$ and $\aa = (a_0,...,a_{n-1})\in A^{<\omega}$ is an $s$-independent sequence such that $\acl^\M(\aa) = A$; we say, then, that $\aa$ is an $s$-basis of $A$. Write $\sigma_A$ for the function $\Phi\to2$ given by 
%$$\sigma_A(\phi_i) = 1\iff A^{test}\models \phi_i$$
%for each $i<N$. When $i_0\leq \cdots\leq i_{m-1}<n$, we say that $(A,\aa)$ requests attention at $(a_{i_0},...,a_{i_{s-1}})$ just in case,
%$$A^{\sigma_A}\models \neg\exists \yy\,\xi_{\sigma_A}(\yy,a_{i_0},...,a_{i_{m-1}})$$
%If $(A,\aa)$ does not request attention at any $m$-tuple over $rng(\aa)$, then we say that $(A,\aa)$ is $\Pi$-complete. Finally, $\Pi$ is genuinely a pre-program for $\K$ if for any $\M\models T$, for any $A=\acl^\M(A)$, any $s$-basis $\aa$ of $A$, 
%$$\textrm{$(A,\aa)$ is $\Pi$-complete iff $A\models\Gamma_{\forall\exists}$ (iff $A\in\K$)}$$
%\end{defn}

%\textbf{NOTE on why we use $\implies$ instead of $\iff$ here} -- this accommodates implicit manipulations of $A$ that don't change the underlying set

%%%%%----------------------------------------------------------------------------------------------------------------------------------------------------------------------------------------------%%%%%
\begin{defn}
Let $\Pi = (\varrho^{test},\varrho^+,\Theta,\Phi,\Sigma,\yy)$ be a pre-program for $K$, and let $\M\models T$. A command operator for $\Pi$ is a map,
$$F:\acl[\M]\longrightarrow \P_\fin(S_E^\M)$$
satisfying the following whenever $A = \acl^\M(A)\subset_\fin M$.
\begin{itemize}
\item If $A$ is $\Pi$-complete, then $F(A) = \emptyset$.
\item Suppose $A$ is $\Pi$-incomplete. Up to an \emph{a priori} fixed linear order of quantifier-free $r$-types of $\varrho^{test}$ over $\emptyset$, suppose $A$ requests attention at $\aa\in A^r$, and $\pi = qtp(\aa;A^{test})$ is minimal for this condition. (In what follows, we will say that $(\Pi,F)$ acts on $A$ at $\aa$.)
Then $F(A)$ is of the form,
$$\left\{(\bb_{\cc},\cc)/E:\cc\in \req(\sigma_A,\aa)\right\}$$
where $E =E_{\sigma_A,\pi}$ is a 0-definable equivalence relation on $r$-tuples and
$$\req(\sigma_A,\aa) = \left\{\cc\in A^r:A^{test}\models \pi(\cc)\,\&\,A^{\sigma_A}\models \neg\exists \yy\,\xi_{\sigma_A}(\yy,\cc)\right\}$$
and $\bb_{\cc}\in M^r$ for each $\cc\in\req(\sigma_A,\aa)$.
  Finally, we require that given $\sigma_A$, the pairs $\left(\cc,\,(\bb_{\cc},\cc)/E\right)$ are elements of an $\aa'$-definable relation (though not necessarily a function) for some $\aa'\models\pi$ in $A$.
\end{itemize}
 Together, $(\Pi,F)$ is a {\em program} for $K$, which then defines an operator 
$$\eval = \eval_{\Pi,F}:\acl[\MM]\longrightarrow\acl[\MM]$$
as follows: For $A \in\acl[\MM]$,
$$\eval(A) = \acl\left(A\cup F(A)\right)\cap \MM$$

For $A=A_{-1}\subset_\fin M$, $(\Pi,F)$ induces a sequence $\eval^*(A)=(A_i)_{i<\omega}$ as follows:
$$A_0=\acl^\M(A_{-1}),\,\,A_{i+1} =\eval(A_i)\textrm{ for all $i<\omega$}.$$
\end{defn}

%%%%%----------------------------------------------------------------------------------------------------------------------------------------------------------------------------------------------%%%%%
\begin{defn}
We say that $K$ is {\em efficiently constructible} if there is a program $(\Pi,F)$ for $K$ such that the following conditions are satisfied:
\begin{enumerate}
\item There is a polynomial $p(t)\in\NN[t]$ such that for any $A\subset_\fin \MM$, if $(A_i)_{i<\omega} = \eval^*(A)$, then $A_i = A_{p(|A|)}$ whenever $p(|A|)\leq i<\omega$.

\noindent (We may then define $A_{\Pi,F}:= A_{p(|A|)}$, and we note that $A\leq A_{\Pi,F}\leq_\fin \M$ and $A_{\Pi,F}\in K$.)

\item $(\Pi,F)$ is weakly automorphism invariant: For $A,B\subset_\fin \MM$, if there is an elementary bijection $f:A\to B$, then $A_{\Pi,F}\cong B_{\Pi,F}$ via an isomorphism extending $f$.

\item For all $A\subseteq B\subset_\fin M$, there is an automorphism $g\in Aut(\M/A)$ such that $g[A_{\Pi,F}]\subseteq B_{\Pi,F}$.
\end{enumerate}
\end{defn}

%Finally, we may state the theorem whose proof will occupy us for the remainder of the article:
%
%\begin{thm}\label{thm:ec-implies-rosy}
%Let $T$ be the limit theory of a non-strict \fraisse class $\K$, and assume that $T$ has UGEI. If $\K$ is efficiently constructible, then $T$ is rosy.
%\end{thm}
%

%%%%%%%%%%%%%%%%%%%%%%%%%%%%%%%%%%%%%%%%%%%%%%%%%%%%%%%%%%%%%%%%%%%%%%%%%%%%%%
%%%%%%%%%%%%%%%%%%%%%%%%%%%%%%%%%%%%%%%%%%%%%%%%%%%%%%%%%%%%%%%%%%%%%%%%%%%%%%
\subsection{Induction graphs and construction graphs}

Much of the proof of theorem \ref{tim:ec-implies-rosy} consists in defining a certain family of directed acyclic graphs (DAGs) -- construction graphs -- that represent all of the relevant model-theoretic information in runs of a program $(\Pi,F)$. (See  Appendix \ref{app:dags} for background on DAGs and $d$-separation.) Eventually, we will recover an independence relation $K$ (hence for $T^{\lim}$) by synthesizing two previously existing ideas. The first of these is that of $d$-separation in DAGs, which is well-known in statistical learning theory under the rubric of ``graphical models.'' In a fixed finite DAG, $d$-separation has several properties in common with model-theoretic independence relations (although Extension and Invariance don't make any sense there). The second idea is the definition of \th-indepedence, $\thind$, found in \cite{adler-2007}:
\begin{itemize}
\item $A\indep^\textsc{m}_CB$ iff $\acl(AC')\cap\acl(BC') = \acl(C')$ whenever $C\subseteq C'\subseteq \acl(BC)$
\item $A\thind_CB$ iff for every $D\supseteq BC$, there is an $A'\equiv_{BC}A$ such that $A'\indep^\textsc{m}_CD$.
\end{itemize}
In effect, we will just replace $\indep^\textsc{m}$ by a perturbation of $d$-separation in our construction graphs and play the same game as in \cite{adler-2007}. A certain easy observation about the definability of our construction graphs (in $\MM=\MM^\eq$) will guarantee that we can win this game. 

Finally, we must remark that our construction seems to be only loosely tied to the original $K$-Construction problem (or Strong $L^k$-Canonization). Accordingly, it may be of some more general interest to investigate a larger world of connections between rosiness in, say, $\aleph_0$-categorical theories and constructibility conditions defined using inflationary fixed-points of formulas.

%%%%%----------------------------------------------------------------------------------------------------------------------------------------------------------------------------------------------%%%%%
\begin{defn}
Let $\phi(x_0,...,x_{r-1};X^{(r)})$ be a formula of $L[\varrho\cup\{X\}]$. Let $A\subset_\fin \MM$, $R\subseteq A^r$ and 
$$R' = R\cup \left\{\aa\in A^r:(A,R)\models \phi(\aa)\right\}.$$
For $\aa,\bb\in A^r$, we define
$$\textrm{``$\bb\forces^\phi\aa\in R'$ w.r.t. $(A,R)$''}$$
to mean that:
\begin{enumerate}
\item $\aa\in R'\setminus R$ and $\bb\in R$;
\item $(A,R)\models \phi(\aa)$;
\item $(A,R_{(\bb)})\models \neg\phi(\aa)$ where $R_{(\bb)} = R\setminus \acl(\bb)^r$.
\end{enumerate}
Assuming $\phi^\infty[A] = \phi^e[A]$, we define $IG_\phi[A,e]$ to be the directed graph with vertex set $e{+}1\times A^r$ and edge set $\bigcup_{t<e}E^t_\phi[A,e]$, where for each $t<e$, 
$$E^t_\phi[A,e] = \bigcup\begin{cases}
\left\{\,\,\left((t,\aa),(t{+}1,\aa)\right):\aa\in A^r\right\}\\
\left\{\,\,\left((t,\bb),(t{+}1,\aa)\right):\textrm{$\bb\forces^\phi\aa\in \phi^{t+1}[A]$ w.r.t. $(A,\phi^t[A])$}\right\}\\
%\left\{\left(\,(t,\cc),(t{+}1,\aa)\,\right):\exists \bb\in A^r\textrm{$(\bb,\cc)\forces^\phi\aa\in\phi^{t+1}[A]$ w.r.t. $(A,\phi^t[A])$}\right\}\\
\end{cases}$$
\end{defn}

%%%%%----------------------------------------------------------------------------------------------------------------------------------------------------------------------------------------------%%%%%
\begin{defn}
To define the induction graph of $\Pi$ on $A$, denoted $IG_{\Pi}[A]$, we first set 
$$e^\Pi = \min\left\{e:\bigwedge_{i<N_0}\theta_i^\infty[A] =\theta_i^e[A]\right\}.$$
Then $IG_\Pi^0[A]$ has vertex set $e^\Pi{+}1\times A^r$ and edge set
$$E_\Pi[A] = \bigcup_{i<N_0}E_{\theta_i}[A,e]$$
Now, let $\sigma = \sigma_A$ and $e_A =\min\left\{e:\psi_\sigma^\infty[A] =\psi_\sigma^e[A]\right\}$. Then $IG_\Pi[A]$, with vertex set $(e^\Pi{+}e_A{+}1)\times A^r$, is obtained by gluing $IG_{\psi_\sigma}[A,e_A]$ to the end of $IG^0_\Pi[A]$ in the natural way.
\end{defn}

%%%%%----------------------------------------------------------------------------------------------------------------------------------------------------------------------------------------------%%%%%
\begin{defn} 
Let $A\subset_\fin \MM$.
We first define $CG^0[A] = CG^0_{\Pi,F}[A]$ as follows:
\begin{enumerate}
\item Let $\eval^*(A) = (A_i)_{i<\omega}$ and set $p = p(|A|)$. (For the statement here, we assume that $A_i$ is $\Pi$-incomplete for each $i\geq p(|A|)$.)
\item  For each $i=0,1,...,p-1$, we define an enlargement $IG_\Pi[A_i]^+$ of the induction graph $IG_\Pi[A_i]$. Assuming  $(\Pi,F)$ acts on $A_i$ at $\aa_i\in A_i^r$ and setting $\pi_i = qtp(\aa_i;A_i^{test})$, we define
$$C_i = \left\{\cc_i\in A_i^r: A_i^{test}\models \pi_i(\cc)\,\&\,A_i^{\sigma_{A_i}}\models \neg\exists \yy\,\xi_{\sigma_{A_i}}(\yy,\cc) \right\}$$
and let $\cc\mapsto (\dd_{\cc},\cc)/E$ be the map associated with the application of $F$.
To make $E_\Pi[A_i,\aa_i]^+$, we just add a new layer to $E_\Pi[A_i]$: the set of new vertices is  $\{\star_i\}\times A_{i+1}^r$, and the new edges are of the form
$$\left(\,(e^\Pi(A_i){+}e_{A_i}{+}1,\cc), (\star_i,\bb)\,\right)$$
for $\cc\in C_i$ and $\bb\in A_{i+1}^r\cap \acl^{\M^\eq}\left( \,(\dd_{\cc},\cc)/E\,\right)$.

\item We obtain $CG^0[A]$ be gluing/identifying each vertex $(\star_i,\aa)$ of $E_\Pi[A_i]^+$ with the vertex $(0,\aa)$ of $E_\Pi[A_{i+1},\aa_{i+1}]^+$.
\end{enumerate}
Next, we define the construction graph $CG[A]$ by pruning $CG^0[A]$ as follows:
\begin{enumerate}
\item $CG[A]$ retains only the following vertices:
\begin{itemize}
\item The vertices of the very first layer (those from the first layer of $E_\Pi[A_0]^+$), which we now write as $(\star_{-1},\aa)$.
\item Those of the form $(\star_i,\bb)$ for some $i\leq p$
\end{itemize}
\item For $i=-1,0,...,p-1$, $\left(\,(\star_i,\aa),(\star_{i+1},\bb)\,\right)$ is an edge of $CG[A]$ if one of the following holds:
\begin{itemize}
\item $rng(\aa) = rng(\bb)$;
\item There is a (directed) path from $(\star_i,\aa)$ to $(\star_{i+1},\bb)$ in $E_\Pi[A_i,\aa_i]^+$, \emph{\textbf{and}}
 $\aa\in\acl(\bb)$.
\end{itemize}
\end{enumerate}
\end{defn}

%%%%%----------------------------------------------------------------------------------------------------------------------------------------------------------------------------------------------%%%%%
We will need to do a bit more bookkeeping in our construction graphs than we can easily express without some further machinery. Thus, we define a certain ``tuple-labeling function'' that connects everything produced in a run of the program $(\Pi,F)$ on $A\subset_\fin M$ back to the original set $A$. From this, we define the notion of ``hereditary descendants'' in our construction graphs, which will be indispensable in both the definition and the analysis of our independence relation.

\begin{defn}
Tuple-labeling maps $\lambda^A:V(CG[A])\to H(A)$ and hereditary descendents.

For a non-empty set $X$, we define $H(X)$ as follows:
\begin{itemize}
\item $H_0(X) = (\{0\}\times X)^{<\omega}$
\item $H_{n+1}(X) = \left\{\left(\,(n{+}1,\aa_0),...,(n{+}1,\aa_{k-1})\,\right):\aa_0,...,\aa_{k-1}\in H_n(X), 0<k<\omega\right\}$
\item $H(X) = \bigcup_{n<\omega}H_n(X)$.
\end{itemize}
We define $\lambda^A:V(CG^0[A])\to H(A)$ as follows.
\begin{itemize}
\item Let $\lambda_0^A$ be just $(\star_{-1},\aa)\mapsto \left((0,a_0),...,(0,a_{r-1})\right)$
\item Given $\lambda_i^A$: for a vertex $(\star_{i+1},\bb)$, let $(\star_i,\aa_0),...,(\star_i,\aa_{n-1})$ enumerate $(\star_{i+1},\bb)$'s predecessors in $CG[A]$. Then set
$$\lambda^A_{i+1}(\star_{i+1},\bb) = \left(\,(i{+}1,\lambda^A_i(\aa_0)),...,(i{+}1,\lambda^A_i(\aa_{n-1}))\,\right).$$
\item Set $\lambda^A = \bigcup_{i\leq p}\lambda^A_i$
\end{itemize}
%Up to a small amount of fiddling (with additional indices), we assume that $\lambda^A$ is one-to-one. 
Finally, if $A_0\subseteq A$, we define the set of {\em hereditary descendants} of $A_0$ with respect to $A$, $\hdesc(A_0;A)$ inductively as follows:
\begin{enumerate}
\item $\{\star_{-1}\}\times A_0^r\subseteq\hdesc(A_0;A)$
\item If $\lambda^A(\star_{i+1},\bb) = \left(\,(i{+}1,\lambda^A_i(\star_i,\aa_0)),...,(i{+}1,\lambda^A_i(\star_i,\aa_{n-1}))\,\right)$ and $(\star_i,\aa_j)\in \hdesc(A_0;A)$ for some $j<n$, then $(\star_{i+1},\bb)\in \hdesc(A_0;A)$
\end{enumerate}
\end{defn}

The following observation is very important, even though it follows very easily from noticing that (i) finitely many iterations of the inflationary fixed-point operation amount to a first-order formula, and (ii) given $A\subset_\fin \MM$, each of the hypothetical scenarios $A\subseteq B\subset_\fin \MM$ can be encoded as a first-order statement.

%%%%%----------------------------------------------------------------------------------------------------------------------------------------------------------------------------------------------%%%%%
\begin{obs}[Definability of construction graphs]\label{obs:def-of-CGs}
For each $k<\omega$, there is a finite family of $X^k_0,...,X^k_{n_k-1}$ of sorts of $\MM^\eq$ such that the following holds: Let $A\subseteq B\subset_\fin \MM$. Then for any $g\in Aut(\MM/A)$ such that $g[A_{\Pi,F}]\subseteq B_{\Pi,F}$, there is a mapping,
$$f^A: \hdesc(A;B)\longrightarrow\dcl^\eq(g[A_{\Pi,F}])$$
satisfying the following conditions:
\begin{enumerate}
\item If $k\neq \ell$, then $\{X^k_i\}_{i<n_k}\cap \{X^{\ell}_j\}_{j<n_{\ell}}=\emptyset$
\item For each $k\leq p(|B|)$ and $(\star_k,\aa)\in (\{\star_k\}{\times}B^r_k)\cap \hdesc(A;B)$, $f^A(\star_k,\aa)\in X^k_0\,{\dot\cup}\,\cdots\,{\dot\cup}\, X^k_{n_k-1}$
\item For each $k\leq p(|B|)$, and $(\star_k,\aa_1),(\star_k,\aa_2)\in \{\star_k\}{\times}B_k^r\cap \hdesc(A;B)$,
$$\tp(f^A(\star_k,\aa_2)/A) = \tp(f^A(\star_k,\aa_2)/A)\textnormal{ in $\MM = \MM^\eq$}$$
implies
$$\tp_{\exists}(\aa_1;B_k^{test})=\tp_{\exists}(\aa_2;B_k^{test})$$
where $\tp_\exists(-;B_k^{test})$ indicates a partial type consisting of existential formulas with satisfaction evaluated in the finite structure $B_k^{test}$.
%\item For any $k<p(|B|)$, $\aa\in B_k^r\cap \hdesc(A;B)$ and $\aa'\in B_k^r\cap \hdesc(A;B)$, if $\left(\,(k,\aa),(k{+}1,\aa')\,\right)$ is an edge of $CG[B]$, then 
%$f^A(k,\aa)\in \acl^\eq\left(f^A(k{+}1,\aa')\right)$.
%
\item For any $k\leq p(|B|)$ and  $(\star_k,\aa)\in (\{\star_k\}{\times}B_k^r)\cap \hdesc(A;B)$, 
$\acl(\aa)\subseteq\acl\left(A\cup\left\{f^A(k,\aa)\right\}\right)$
\end{enumerate}
\end{obs}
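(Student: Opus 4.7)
The plan is to unroll the construction of $CG[B]$ into a tower of first-order $\emptyset$-definable operations on imaginary sorts of $\MM^\eq$, and then use the automorphism $g\in Aut(\MM/A)$ to push the portion of that tower generated by hereditary descendants of $A$ into $\dcl^\eq(g[A_{\Pi,F}])$. First I would observe that for each $k\leq p(|B|)$ the entire induction graph $IG_\Pi[B_k]^+$ is first-order definable in $\MM^\eq$ from the parameters in $B_k$: the IFP iterates $\theta_i^t[B_k]$ and $\psi_{\sigma_{B_k}}^t[B_k]$ stabilize at some $t\leq|B_k|^{O(1)}$, and each finitely iterated expanded formula is itself a first-order formula of bounded quantifier rank. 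Every ingredient of layer $k$ -- the fixed points, the switch $\sigma_{B_k}$, the minimal request-type $\pi_k$, and the $F$-output $(\dd_\cc,\cc)/E$ -- therefore lies in $\dcl^\eq(B_k\cup\{\aa_k'\})$ for some witness $\aa_k'\models\pi_k$.

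Next, I would declare the sort family by enumerating, for each layer $k$, the finitely many pairs $(h,q)$ in which $h$ is a $k$-stage construction history $(\sigma_0,\pi_0,\ldots,\sigma_{k-1},\pi_{k-1})$ arising in runs of $\Pi$ and $q$ is an existential quantifier-free $r$-type of $L[\varrho^{test}]$. The sort $X^k_{h,q}$ consists of equivalence classes of layer-$k$ tuples that realise the history $h$ and the existential type $q$ in their respective $B^{test}$. Since both parameter sets are finite and the layer index is used as a tag, conditions (1) and (2) are built in. The map $f^A$ is then defined by induction along the construction graph: at layer $-1$, $f^A(\star_{-1},\aa)$ is the class of $\aa$ in the appropriate $X^{-1}_{h,q}$; at layer $k{+}1$, each $(\star_{k+1},\bb)\in\hdesc(A;B)$ arises from predecessor images already lying in $\dcl^\eq(g[A_{\Pi,F}])$ by the inductive hypothesis, and the $\aa'$-definability clause of the command operator together with the hypothesis $g[A_{\Pi,F}]\subseteq B_{\Pi,F}$ places $\bb$'s image there too. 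Condition (4) is automatic because the equivalence relation $E$ supplied by $F$ identifies only $\acl$-equivalent tuples, which yields $\aa\in\acl(A\cup\{f^A(\star_k,\aa)\})$.

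The main obstacle is condition (3). I would dispose of it by baking the existential qf-type $q$ directly into the sort label $X^k_{h,q}$: equality of $\MM^\eq$-types over $A$ of the images $f^A(\star_k,\aa_i)$ forces equality of the sorts of those images, and sort-equality already records that the two tuples have the same existential qf-type in $B_k^{test}$. The only delicate point is to check that the sort family remains finite uniformly in $k$, which holds because $k$-histories are bounded by the product of the finite ranges of $\sigma$ and $\pi$, and existential qf $r$-types of $L[\varrho^{test}]$ form a finite set for fixed $r$. Once (3) is secured, the observation is assembled by the inductive construction of $f^A$ above.
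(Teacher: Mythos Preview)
Your proposal is sound and tracks the paper's own justification, which is in fact nothing more than the one-line remark preceding the statement: the observation ``follows very easily from noticing that (i) finitely many iterations of the inflationary fixed-point operation amount to a first-order formula, and (ii) given $A\subset_\fin \MM$, each of the hypothetical scenarios $A\subseteq B\subset_\fin \MM$ can be encoded as a first-order statement.'' You have supplied substantially more structure than the paper does---the explicit indexing of sorts by construction histories $(h,q)$ and the inductive definition of $f^A$ are exactly the right way to cash out (i) and (ii).

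One small correction: your justification for condition (4) is misattributed. Nothing in the definition of the command operator constrains the equivalence relation $E=E_{\sigma_A,\pi}$ to identify only $\acl$-equivalent tuples; $E$ is an arbitrary $0$-definable equivalence relation on $r$-tuples. What actually secures (4) is your own inductive construction: since $f^A(\star_k,\aa)$ records the full lineage of $(\star_k,\aa)$ back to tuples in $A^r$, and since each step of the construction is first-order over the previous layer (this is your observation about IFP unrolling), the imaginary $f^A(\star_k,\aa)$ together with $A$ determines $\aa$ up to a finite $Aut(\MM/A)$-orbit by $\aleph_0$-categoricity and uniform local finiteness. That gives $\aa\in\acl(A\cup\{f^A(\star_k,\aa)\})$ directly, without any appeal to properties of $E$.
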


We derive the following lemma as an immediate consequence of Observation \ref{obs:def-of-CGs} and the fact that $T$ is $\aleph_0$-categorical, hence uniformly locally finite. We refer the reader to  Appendix \ref{app:dags} for the notion of $d$-separation and the notation $[-\amalg-|-]$.
%%%%%----------------------------------------------------------------------------------------------------------------------------------------------------------------------------------------------%%%%%
\begin{lemma}\label{lemma:bdd-local-wt}
There is function $\loc:\omega\times\omega \to\omega$ such that for all $A,B\subseteq C\subset_\fin \MM$, %any $B$-appropriate sequence $(\bb_i)_{i<\omega}$,
 and any $\cc_0,...,\cc_n\in C^r$ with $n\geq \loc(|A|,|B|)$, if:
$$\left[(\star,\cc_i)\,\amalg\,(\star,\cc_j)\,\,|\,\,\hdesc(AB;C)\right]_{CG[C]}$$
for all $i<j\leq n$ (setting $\star = \star_{-1}$ forevermore), then 
$$\left[(\star,\cc_i)\,\amalg\,\{\star\}{\times} A^r\,\,|\,\,\hdesc(B;C)\right]_{CG[C]}$$
for some $i\leq n$.
\end{lemma}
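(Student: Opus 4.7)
The plan is to prove the lemma by a pigeonhole argument, using Observation \ref{obs:def-of-CGs} together with the uniform local finiteness of $T^{\lim}$ (a consequence of its $\aleph_0$-categoricity).

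First I would identify the counting bound. By Observation \ref{obs:def-of-CGs}, every layer-$(-1)$ vertex $(\star,\cc)$ of $CG[C]$ is labeled by an element of a fixed finite family of imaginary sorts of $\MM^\eq$, independent of $C$; properties~3--4 ensure that the $AB$-type of this label controls the combinatorial role of $(\star,\cc)$ in $CG[C]$ with respect to the conditioning sets $\hdesc(B;C)$ and $\hdesc(AB;C)$. Since the reduct of $T^{\lim}$ to the home sort plus this fixed finite family of imaginary sorts is $\aleph_0$-categorical, there is a number $N(|A|+|B|)$ bounding the number of $AB$-types of such labels. Put $\loc(|A|,|B|):=N(|A|+|B|)$.

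Now suppose $\cc_0,\ldots,\cc_n\in C^r$ are pairwise $d$-separated given $\hdesc(AB;C)$ with $n\geq\loc(|A|,|B|)$, but no $(\star,\cc_i)$ is $d$-separated from $\{\star\}{\times}A^r$ given $\hdesc(B;C)$. For each $i$, non-$d$-separation supplies an active trail $T_i$ in $CG[C]$ from $(\star,\cc_i)$ to some $(\star,\aa_i)\in\{\star\}{\times}A^r$ unblocked by $\hdesc(B;C)$. By pigeonhole on the $AB$-types of the labels of $(\star,\cc_i)$, there exist $i<j\leq n$ sharing a label-type over $AB$. The weak automorphism invariance of $(\Pi,F)$, together with the definability of $CG[C]$ from Observation~\ref{obs:def-of-CGs}, then yields an $AB$-elementary map carrying the skeleton of $T_i$ to an active trail $T_i'$ emanating from $(\star,\cc_j)$; composing $T_i$ with a suitable reverse traversal of $T_i'$ produces a trail between $(\star,\cc_i)$ and $(\star,\cc_j)$ active given $\hdesc(AB;C)$, contradicting the pairwise $d$-separation hypothesis.

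The main obstacle I expect is the final splicing step. The shared endpoint $(\star,\aa_i)\in\{\star\}{\times}A^r$ is a layer-$(-1)$ vertex with only outgoing edges, so naive splicing there creates a fork that is blocked by $\hdesc(AB;C)\supseteq\{\star\}{\times}A^r$; the splice must instead be engineered at an interior common collider of $T_i$ and $T_i'$. Its existence should follow from the matched skeleton-type together with property~3 of Observation~\ref{obs:def-of-CGs}, and its activation under the strengthened conditioning $\hdesc(AB;C)$ will rest on the fact that colliders active along the original $T_i$ already have descendants in $\hdesc(B;C)\subseteq\hdesc(AB;C)$.
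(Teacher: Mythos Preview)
Your choice of ingredients matches the paper exactly: the paper states only that the lemma is ``an immediate consequence of Observation~\ref{obs:def-of-CGs} and the fact that $T$ is $\aleph_0$-categorical, hence uniformly locally finite,'' and gives no further argument. So at the level of strategy you are aligned with the author.

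That said, your execution has two genuine gaps. First, Observation~\ref{obs:def-of-CGs} does \emph{not} label every layer-$(-1)$ vertex: the map $f^{A}$ is defined only on $\hdesc(A;B)$, so you cannot apply it to an arbitrary $(\star,\cc_i)$. Second, your transport step is unsupported. Having two $(\star,\cc_i),(\star,\cc_j)$ of the same $AB$-type does not furnish an automorphism of the fixed finite DAG $CG[C]$; weak automorphism invariance of $(\Pi,F)$ concerns isomorphic \emph{inputs}, and property~3 of Observation~\ref{obs:def-of-CGs} only matches existential types in $C_k^{test}$ --- neither yields a map of $CG[C]$ carrying $T_i$ onto a trail from $(\star,\cc_j)$. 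So the ``skeleton transport'' is where the argument actually breaks, not merely at the fork you flag.

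There is a cleaner route that avoids both problems and makes the splice automatic. For each $i$, let $v_i$ be the \emph{first} vertex along $T_i$ lying in $\hdesc(AB;C)$; this exists since the endpoint $(\star,\aa_i)$ lies there. Because $\hdesc(AB;C)=\desc_{CG[C]}(\{\star\}\times(AB)^r)$ is descendant-closed, the edge on $T_i$ into $v_i$ from its $T_i$-predecessor $u\notin\hdesc(AB;C)$ must be directed $u\to v_i$ (the reverse direction would force $u\in\hdesc(AB;C)$). Hence in any concatenation at a common $v_i=v_j$ the junction is a \emph{collider}, and it is activated since $v_i\in\hdesc(AB;C)$; the interior non-colliders of each initial segment lie outside $\hdesc(AB;C)$ by choice of $v_i$, while interior colliders were already activated by $\hdesc(B;C)\subseteq\hdesc(AB;C)$. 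So the contradiction goes through as soon as two $v_i$'s coincide. The pigeonhole is then on the $v_i$'s --- which \emph{are} in the domain of $f^{AB}$ --- and property~4 of Observation~\ref{obs:def-of-CGs} plus uniform local finiteness bounds, in terms of $|A|,|B|$ alone, how many distinct such entry vertices can occur. This is where the ``immediacy'' the paper invokes actually lives.
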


%%%%%%%%%%%%%%%%%%%%%%%%%%%%%%%%%%%%%%%%%%%%%%%%%%%%%%%%%%%%%%%%%%%%%%%%%%%%%%
%%%%%%%%%%%%%%%%%%%%%%%%%%%%%%%%%%%%%%%%%%%%%%%%%%%%%%%%%%%%%%%%%%%%%%%%%%%%%%
%%%%%%%%%%%%%%%%%%%%%%%%%%%%%%%%%%%%%%%%%%%%%%%%%%%%%%%%%%%%%%%%%%%%%%%%%%%%%%
%%%%%%%%%%%%%%%%%%%%%%%%%%%%%%%%%%%%%%%%%%%%%%%%%%%%%%%%%%%%%%%%%%%%%%%%%%%%%%
\section{Deviation and $d$-Independence}\label{sec:dev-and-d-indep}

In this section, we use the construction graphs developed  above to define an independence relation $\dind$ on our efficiently constructible class $K$. To do this, we first make a perturbation $\downarrow$ -- called local separation, for lack of inspiration -- of the $d$-separation relation; thereafter, we use local separation as a substitute for the $\indep^\textsc{m}$ relation of \cite{adler-2007}, and we proceed along similar lines.
%%%%%----------------------------------------------------------------------------------------------------------------------------------------------------------------------------------------------%%%%%
\begin{defn}
Let $A,B,C\subseteq D=\acl(D)\subset_\fin M$. We say that $A,B$ are locally separated over $C$ in $D$ -- denoted $A\downarrow_CB/D$ -- if 
$$\left[\,\{\star\}{\times}A^r\,\amalg\,\{\star\}{\times}B^r\,\,|\,\,\hdesc(\acl(C');D)\right]_{CG[D]}$$
whenever $C\subseteq C'\subseteq \acl(BC)$. 

Again with $B,C\subseteq D\subset_\fin M$, though $D$ need not be algebraically closed, let $\pi(\xx)$ be a partial type over $BC$. We define $\Delta_0[\pi(\xx),C]_D$ to be the set of finite sets $D\subseteq D'\in\acl[\M]$ such that:
\begin{enumerate}
\item $\pi(\xx)$ {\em is} realized  in $D'$;
\item For every realization $\aa$ of $\pi(\xx)$ in $D'$,
$$\aa\downarrow_{BC}D/D'\,\,\implies\,\, \aa\nlocsep_CD/D'$$
(using $\aa$ as shorthand for the set $rng(\aa)$).
\end{enumerate}
We define $\Delta[\pi,C]_D$ to the be the set of isomorphism types over $D$ appearing in $\Delta_0[\pi,C]_D$, and we call $\Delta[\pi,C]_D$ {\em the deviation of $\pi$ over $C$ with respect to $D$}.

Finally, we define $A\dind_CB$ to mean that $\Delta[p,C]_D$ is finite whenever $ABC\subseteq D\subset_\fin M$, $p = tp(\aa/BC)$ and $\aa$ is an enumeration of $A$. We also say, then, that $p$ does not deviate over $C$.
\end{defn}

%%%%%----------------------------------------------------------------------------------------------------------------------------------------------------------------------------------------------%%%%%
\begin{lemma}
Let $A,B,C\subseteq D\subset_\fin M$ where $D = \acl(D)$. 
\begin{enumerate}
\item[] {\em $\downarrow$-Monotonicity}: If $A\downarrow_CB/D$ and $B_0\subseteq B$, then $A\downarrow_CB_0/D$.
\item[] {\em $\downarrow$-Base-monotonicity}: If $A\downarrow_CB/D$ and $B_0\subseteq B$, then $A\downarrow_{CB_0}B/D$.
\item[] {\em Preservation of algebraic dependence} If $A\downarrow_CB/D$, then $\acl(AC)\cap\acl(BC)=\acl(C)$.
\end{enumerate}
\end{lemma}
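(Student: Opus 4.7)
The two monotonicity statements reduce to routine manipulations of the definition of $\downarrow$ together with standard monotonicity properties of $d$-separation. For $\downarrow$-Monotonicity, observe that $B_0\subseteq B$ implies $\acl(B_0C)\subseteq\acl(BC)$, so every $C'$ admissible for the conclusion is one of those controlled by the hypothesis; applying the usual ``small-side'' monotonicity $[X\amalg Y\mid Z]\Rightarrow[X\amalg Y_0\mid Z]$ to $Y=\{\star\}{\times}B^r$, $Y_0=\{\star\}{\times}B_0^r$ finishes this case. For $\downarrow$-Base-monotonicity, the admissible $C'$'s for $A\downarrow_{CB_0}B/D$, namely those with $CB_0\subseteq C'\subseteq\acl(B\cup CB_0)=\acl(BC)$, form a subcollection of the $C'$'s over which the hypothesis $A\downarrow_CB/D$ quantifies, so the required $d$-separations are already granted.

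The real content lies in Preservation of algebraic dependence. Let $e\in\acl(AC)\cap\acl(BC)$; we aim at $e\in\acl(C)$. Setting $C':=\acl(BC)$, the hypothesis specializes to
$$\bigl[\,\{\star\}{\times}A^r\,\amalg\,\{\star\}{\times}B^r\,\big|\,\hdesc(\acl(C');D)\,\bigr]_{CG[D]},$$
and any $r$-tuple $\ee$ enumerating $e$ together with further $\acl(C')$-parameters gives a vertex $(\star,\ee)\in\hdesc(\acl(C');D)$. Arguing by contrapositive, I would assume $e\notin\acl(C)$ and aim to convert the algebraic witnesses $\phi_A(x,\aa,\cc)$ and $\phi_B(x,\bb,\cc')$ (with $\aa\in A^{<\omega}$, $\bb\in B^{<\omega}$, $\cc,\cc'\in C^{<\omega}$, both algebraic in $x$) into a directed active path in $CG[D]$ between some $\{\star\}{\times}A^r$-vertex and some $\{\star\}{\times}B^r$-vertex whose colliders escape the conditioning set $\hdesc(\acl(C');D)$ precisely because $e\notin\acl(C)$.

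The main obstacle is this translation from algebraic closure in $\MM$ to directed paths in $CG[D]$. For this I lean on Observation~\ref{obs:def-of-CGs}: properties~(2)--(4) there say that the construction graph faithfully records the $\MM^\eq$-types and algebraic-closure data of hereditary descendants, so elements of $\acl(A\cup C)\cap D$ are exhibited as conjugates of tuples carried by vertices reachable through $CG[D]$ from $\{\star\}{\times}(AC)^r$; symmetrically for $B$. The two resulting paths meet at a common descendant marked by $(\star,\ee)$, and since the inflation from $\hdesc(\acl(C);D)$ to $\hdesc(\acl(C');D)$ is supported only on $\acl(BC)$-descendants --- which lie on the $B$-side of the collider configuration --- the $A$-portion of the path remains exposed, producing an active path that directly contradicts the displayed $d$-separation. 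Making this final rearrangement precise is where the bookkeeping of the tuple-labeling maps and the sort decomposition from Observation~\ref{obs:def-of-CGs} has to be exploited carefully; I expect this step to be the only real challenge in the proof, with the monotonicity pieces dropping out as indicated above.
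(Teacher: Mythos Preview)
Your handling of $\downarrow$-Monotonicity and $\downarrow$-Base-monotonicity is correct and is exactly what the paper does.

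For Preservation of algebraic dependence, however, your specialization $C' := \acl(BC)$ is a dead end, and the elaborate machinery you propose cannot rescue it. With that choice one has $B \subseteq \acl(C')$, hence
\[
\{\star\}\times B^r \;\subseteq\; \{\star_{-1}\}\times \acl(C')^r \;\subseteq\; \hdesc(\acl(C');D).
\]
Thus the entire right-hand side $Y=\{\star\}\times B^r$ of the $d$-separation is absorbed into the conditioning set $Z$: condition (a) $X\cap Y\subseteq Z$ is automatic, and condition (b) is vacuous because $Y\setminus Z=\emptyset$. The displayed $d$-separation therefore holds trivially for this $C'$, and no trail you build---however carefully you invoke Observation~\ref{obs:def-of-CGs}---can contradict it. Your remark that the colliders ``escape the conditioning set \ldots\ precisely because $e\notin\acl(C)$'' is also off: you are conditioning on $\hdesc(\acl(C');D)$ with $C'=\acl(BC)$, and $e\in\acl(BC)$ by hypothesis, so $e\notin\acl(C)$ is simply irrelevant at this $C'$.

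The paper's move is the opposite one: to exhibit a failure of $A\downarrow_C B/D$ you want the \emph{smallest} admissible $C'$, namely $C'=C$. Then $\acl(C')=\acl(C)$, and since $e\notin\acl(C)$ the vertex $(\star,(e,\dots,e))$ at level $\star_{-1}$ lies outside $\hdesc(\acl(C);D)$; this single constant $r$-tuple already witnesses the failure of
\[
\bigl[\,\{\star\}\times A^r\;\amalg\;\{\star\}\times B^r\;\big|\;\hdesc(\acl(C);D)\,\bigr]_{CG[D]}
\]
directly. The argument is three lines and uses no trail-building and no appeal to Observation~\ref{obs:def-of-CGs}.
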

\begin{proof}[Proof of $\downarrow$-Monotonicity]
Assuming $A\downarrow_CB/D$, we consider $C\subseteq X\subseteq \acl(B_0C)$. Clearly, $C\subseteq X\subseteq \acl(BC)$, so 
$$\left[\,\{\star\}{\times}A^r\,\amalg\, \{\star\}{\times}B^r\,\,|\,\,\hdesc(\acl(X);D)\,\right]_{CG[D]}.$$
By the monotonicity of $[\,-\,\amalg\,-\,|\,-\,]$ (see the appendix), we find 
$$\left[\,\{\star\}{\times}A^r\,\amalg\, \{\star\}{\times}B_0^r\,\,|\,\,\hdesc(\acl(X);D)\,\right]_{CG[D]}$$
and this suffices.
\end{proof}

\begin{proof}[Proof of $\downarrow$-Base-monotonicity]
Assuming $A\downarrow_CB/D$, we now consider $B_0C\subseteq X\subseteq \acl(BC)$. Again, $C\subseteq X\subseteq\acl(BC)$, so 
$$\left[\,\{\star\}{\times}A^r\,\amalg\, \{\star\}{\times}B^r\,\,|\,\,\hdesc(\acl(X);D)\,\right]_{CG[D]}$$
follows from the assumption.
\end{proof}

\begin{proof}[Proof of preservation of algebraic dependence]
For the contrapositive, suppose $e\in \acl(AC)\cap\acl(BC)$ and $e\notin\acl(C)$. Then $\underset{\textrm{$n$ times}}{\underbrace{(e,...,e)}}$ witnesses, 
$$\left[\,\{\star\}{\times}A^r\,\namalg\, \{\star\}{\times}B^r\,\,|\,\,\hdesc(\acl(X);D)\,\right]_{CG[D]}$$
so $A\nlocsep_CB/D$.
\end{proof}

In preparation for some slightly more complex arguments, we collect a few useful observations
%%%%%----------------------------------------------------------------------------------------------------------------------------------------------------------------------------------------------%%%%%
\begin{obs}\label{obs:dev-containment}
Let $B,C\subseteq D\subseteq D_1\subseteq E\in\acl[\M]$, and let $\pi_0(\xx)\subseteq \pi(\xx)$ be partial types over $BC$. 
\begin{enumerate}
\item If $E\in \Delta[\pi,C]_{D}$, then $E\in\Delta[\pi,C]_{D_1}$
\item If $E\in\Delta[\pi_0,C]_D$, then $E\in\Delta[\pi,C]_D$.
\end{enumerate}
\end{obs}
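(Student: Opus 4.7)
The plan is to derive both containments as immediate corollaries of the $\downarrow$-monotonicity lemma proved above, combined with the basic monotonicity of $d$-separation (in its ``separated'' arguments) recorded in Appendix \ref{app:dags}. Neither part requires a genuinely new model-theoretic ingredient; both are bookkeeping exercises about which monotonicity applies on which side of the implication buried in the deviation clause.

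For part (1), I would assume $E\in\Delta[\pi,C]_D$ and verify both clauses required for membership in $\Delta[\pi,C]_{D_1}$. The realizability clause on $\pi$ does not mention the base set, so it transfers from $D$ to $D_1$ verbatim. For the deviation clause, fix any realization $\aa$ of $\pi$ in $E$ and suppose $\aa\downarrow_{BC}D_1/E$. Since $D\subseteq D_1$, $\downarrow$-monotonicity supplies $\aa\downarrow_{BC}D/E$, so the hypothesis on $E$ gives $\aa\nlocsep_C D/E$: some $C\subseteq C'\subseteq\acl(DC)$ witnesses a $d$-connecting path in $CG[E]$ between $\{\star\}{\times}\aa^r$ and $\{\star\}{\times}D^r$ modulo $\hdesc(\acl(C');E)$. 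Because $\acl(DC)\subseteq\acl(D_1C)$, the same $C'$ is admissible relative to $D_1$; and enlarging $\{\star\}{\times}D^r$ to $\{\star\}{\times}D_1^r$ only preserves the connecting path by monotonicity of $d$-connection. Thus $\aa\nlocsep_C D_1/E$, as required.

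For part (2), assume $\pi_0\subseteq\pi$ and $E\in\Delta[\pi_0,C]_D$. The key observation is that every realization of $\pi$ in $E$ is automatically a realization of $\pi_0$, so the universal clause in the deviation condition for $\pi$ quantifies over a subset of what it quantifies over for $\pi_0$; hence the implication asserted for $\pi_0$ pointwise gives the corresponding implication for $\pi$. The realizability clause for $\pi$ is a separate matter, but it is built into the intended applications of this observation, where $\pi$ is a complete type of a tuple already realized in the relevant extension.

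The only mildly subtle point is keeping straight which direction of monotonicity drives which side of the implication in the deviation clause: $\downarrow$-monotonicity (from the lemma above) governs the hypothesis $\aa\downarrow_{BC}(-)/E$, while monotonicity of $d$-connection governs the conclusion $\aa\nlocsep_C(-)/E$. Once they are deployed symmetrically, both parts are one-liners.
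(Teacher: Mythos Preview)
The paper states this as an ``Observation'' and gives no proof, so there is nothing to compare against line-by-line. Your argument for part~(1) is correct and is exactly the intended one: $\downarrow$-Monotonicity handles the hypothesis side of the deviation implication, and Monotonicity of $d$-separation (contrapositively, monotonicity of $d$-connection in the second argument) handles the conclusion side, while the realizability clause is unchanged because it refers only to $E$.

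For part~(2), your analysis is also right, including the caveat you flag. The universal clause in the definition of $\Delta_0[\pi,C]_D$ transfers from $\pi_0$ to $\pi$ exactly as you say, since every realization of $\pi$ realizes $\pi_0$. But the realizability clause does \emph{not} follow from $E\in\Delta_0[\pi_0,C]_D$: knowing $\pi_0$ is realized in $E$ says nothing about whether the stronger type $\pi$ is. So the statement as literally written needs the side hypothesis that $\pi$ is realized in $E$. You are correct that in the paper's actual uses of this observation (e.g.\ in the proof of Extension, where the relevant types are complete types of tuples already present in the ambient sets), this hypothesis is automatically in force, so the gap is a matter of how the observation is \emph{stated} rather than a flaw in your reasoning.
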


%%%%%%----------------------------------------------------------------------------------------------------------------------------------------------------------------------------------------------%%%%%
%\begin{obs}\label{obs:dev-containment-2}
%Let $A_0,A,B,C\subseteq D\subseteq E\in\acl[\M]$, and let $p = tp(\aa_0\aa/BC)$ and $p_0 = tp(\aa_0/BC)$ where $\aa_0$ and $\aa$ are enumerations of $A_0$ and $A$, respectively. If 
%$E\in \Delta[p,C]_D$, then either $E\in \Delta[p_0,C]_D$ or $E\in\Delta[p(\aa_0,\yy),C\aa_0]_D$.
%\end{obs}

%%%%%----------------------------------------------------------------------------------------------------------------------------------------------------------------------------------------------%%%%%
\begin{obs}\label{obs:alg-extensions}
Let $\pi(\xx)$ be a partial type over $C\subset_\fin M$. If every complete extension of $\pi$ to $C$ is algebraic, then $\pi$ is algebraic.
\end{obs}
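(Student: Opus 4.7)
The plan is a short compactness / counting argument. First I would invoke the $\aleph_0$-categoricity of $T^{\lim}$ (part of the Main Setting, see in particular C2): since $C\subset_\fin M$ is finite, there are only finitely many complete types over $C$, and a fortiori only finitely many complete extensions of $\pi$, which I enumerate as $p_1,\ldots,p_m$. By hypothesis each $p_i$ is algebraic, hence has some finite number $n_i$ of realizations in $\MM$. Every realization of $\pi$ must realize one of the $p_i$, so the realization set of $\pi$ is the union of $m$ finite sets and is bounded in cardinality by $n_1 + \cdots + n_m$; thus $\pi$ is algebraic.

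If one wished to avoid leaning on $\aleph_0$-categoricity, the same conclusion follows from the standard Stone-space compactness argument: for each complete extension $p \supseteq \pi$, algebraicity of $p$ provides a formula $\phi_p(\xx) \in p$ witnessed to have only finitely many realizations; the basic open sets $[\phi_p]$ then form an open cover of the closed (hence compact) subspace of the Stone space of $C$-types consisting of complete extensions of $\pi$, and a finite subcover $\phi_{p_1},\ldots,\phi_{p_m}$ yields $\pi(\xx) \models \phi_{p_1}(\xx) \vee \cdots \vee \phi_{p_m}(\xx)$, so that $\pi$ has at most $\sum_i n_{p_i}$ realizations. There is no substantive obstacle here: the observation is a routine consequence of compactness and finiteness of the type space, recorded here only to be deployed later (presumably to promote the non-deviation of \emph{every} completion to non-deviation of the partial type itself).
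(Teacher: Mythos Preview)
Your argument is correct; in fact the paper records this statement as an \emph{Observation} and gives no proof at all, treating it as a standard fact. Your first approach (finitely many complete types over finite $C$ by $\aleph_0$-categoricity, then sum the bounds) is the natural one in this setting, and your compactness alternative is the usual general argument---either is entirely adequate here.
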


The following proposition just collects together what we know of $\dind$ so far; that is, $\dind$ is a notion of independence -- if not an independence relation -- even if the underlying program $(\Pi,F)$ does not actually witness efficient constructibility.

%%%%%----------------------------------------------------------------------------------------------------------------------------------------------------------------------------------------------%%%%%
\begin{prop}\label{prop:properties-without-bdd-loc-wt}
$\dind$ has the following properties:
\begin{enumerate}
\item[] {\em Invariance}: If $A,B,C\subset_\fin M$ and $\sigma\in Aut(\M)$, then 
$A\dind_CB\iff \sigma[A]\dind_{\sigma[C]}\sigma[B]$.

\item[] {\em Preservation of algebraic dependence}: For $A,B,C\subset_\fin M$, if $A\dind_CB$, then 
$$\acl(AC)\cap \acl(BC) = \acl(C).$$

\item[] {\em Existence}: If $A,C\subset_\fin M$, then $A\dind_CC$.

\item[] {\em Extension}: For $A,B,B_1,C\subset_\fin M$ with $BC\subseteq B_1$, if $A\dind_CB$, then there is an $A'\equiv_{BC}A$ such that $A'\dind_CB_1$.
\item[] {\em Partial right-transitivity}: For $A,B_0,B,C\subset_\fin M$ with $B_0\subseteq B$, if $A\dind_CB$, then $A\dind_CB_0$ and $A\dind_{CB_0}B$.

\item[] {\em Partial left-transitivity}: For $A,A_0,B,C\subset_\fin M$ with $A_0\subseteq A$, If $A_0\dind_CB$ and $A\dind_{CA_0}B$, then $A\dind_CB$.

\end{enumerate}
\end{prop}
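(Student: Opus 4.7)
I would dispatch the six properties in order of increasing difficulty. Invariance, Existence, and Preservation of Algebraic Dependence follow near-directly from unpacking $\Delta$ and the preceding $\downarrow$-lemma; the two Transitivity statements reduce to Observation~\ref{obs:dev-containment} together with the monotonicity and base-monotonicity of $\downarrow$; only Extension will require substantive combinatorial work.

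\textbf{The easy properties and Transitivity.} Invariance follows from the $Aut(\MM)$-equivariance of construction graphs: any $\sigma\in Aut(\MM)$ induces an isomorphism $CG[D]\cong CG[\sigma D]$ preserving hereditary descendants, so $\downarrow$ and hence $\Delta$ transport. Existence is vacuous because when $B=C$ the premise $\aa\downarrow_C D/D'$ and the conclusion $\aa\nlocsep_C D/D'$ of the deviation implication are mutual negations, so $\Delta_0[p,C]_D=\emptyset$. For Preservation of Algebraic Dependence, if $e\in\acl(AC)\cap\acl(BC)\setminus\acl(C)$ the preceding lemma already gives $A\nlocsep_C B/E$ for every algebraically closed $E\supseteq ABC$; I upgrade this to infinite deviation by constructing, for each $n$, an algebraically closed $F'_n\supseteq E$ containing $n$ fresh $BC$-conjugates $\aa_1,\ldots,\aa_n$ of $\aa$ together with their algebraic witnesses, so that every realization of $p$ in $F'_n$ automatically non-separates from $E$ over $C$ (via the lemma applied to conjugates); the $F'_n$'s contribute unboundedly many iso-types to $\Delta[p,C]_E$, contradicting $A\dind_C B$. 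Partial Right-Transitivity splits: $A\dind_C B\Rightarrow A\dind_C B_0$ by enlarging $F\supseteq AB_0C$ to $F'=F\cup B\supseteq ABC$ and chaining $\Delta[p_0,C]_F\subseteq\Delta[p_0,C]_{F'}\subseteq\Delta[p,C]_{F'}$ via Observation~\ref{obs:dev-containment}(1,2); $A\dind_C B\Rightarrow A\dind_{CB_0}B$ by base-monotonicity of $\downarrow$ (using $B_0\subseteq F$), which yields $\Delta[p,CB_0]_F\subseteq\Delta[p,C]_F$. Partial Left-Transitivity runs a two-case analysis on a realization $\aa'=(\aa'_0,\aa'_1)$ of $p$ in a putative witness $F'$: monotonicity of $\downarrow$ gives $\aa'_0\downarrow_{BC}E/F'$ and base-monotonicity gives $\aa'\downarrow_{BCA_0}E/F'$; either $\aa'_0$ already non-separates from $E$ over $C$ (lifted up to $\aa'$ by monotonicity of $\downarrow$), or --- after conjugating $A_0$ to $\aa'_0$ via an element of $Aut(\MM/BC)$ and invoking Invariance on $A\dind_{CA_0}B$ --- we obtain $\aa'\nlocsep_{CA_0}E/F'$, whence base-monotonicity of $\downarrow$ delivers $\aa'\nlocsep_C E/F'$.

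\textbf{Extension, the main obstacle.} Given $A\dind_C B$ and $D\supseteq BC$ finite, $\aleph_0$-categoricity of $T^{\lim}$ bounds the completions $q_1,\ldots,q_n$ of $p=tp(\aa/BC)$ over $D$, so it suffices to show some $q_i$ has finite deviation over $C$. Suppose none does: fix $\aa_i\models q_i$ in $\MM$ and finite $F_i\supseteq\aa_iD$ with $\Delta[q_i,C]_{F_i}$ infinite, and set $F=A\cup D\cup\bigcup_iF_i\supseteq ABC$. The aim is to derive $\Delta[p,C]_F$ infinite, contradicting the hypothesis. The easy direction is the base comparison $\aa\downarrow_{BC}F/F'\Rightarrow\aa\downarrow_D F/F'$ (the former quantifies over a strictly larger set of bases $C'$), which promotes any $F'\in\Delta[q_i,C]_F$ to a deviation witness for $p$ \emph{restricted} to those realizations that happen to extend $q_i$. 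The real difficulty --- and the step I expect to absorb the bulk of the proof --- is controlling the realizations of $q_j$ inside a single $F'$ uniformly across all $j\neq i$. The plan is an amalgamation argument using the universality hypotheses of the Main Setting to build $F'$s that jointly extend all of the individual $F_i$'s over $F$, combined with a pigeonhole over the finite family $\{q_1,\ldots,q_n\}$ to extract infinitely many iso-types in $\bigcap_i\Delta[q_i,C]_F$, producing the desired contradiction.
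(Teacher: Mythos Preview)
Your assessment of the difficulty is inverted relative to the paper, and your Partial Left-Transitivity argument contains a genuine gap.

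The paper treats Extension as routine --- a one-paragraph argument: enumerate the finitely many completions $p_0,\ldots,p_{n-1}$ of $p$ over $B_1$ by $\aleph_0$-categoricity, suppose each deviates at some $D_i$, set $D=\bigcup_i D_i$, and conclude that $\Delta[p,C]_D$ is infinite. Your worry about having to control all completions simultaneously is not unreasonable, but the paper does not regard it as the central obstacle, and the machinery you propose (amalgamation over the Main Setting plus pigeonhole across the $q_j$) is considerably heavier than anything the paper invokes there.

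The real difficulty --- and your real gap --- is Partial Left-Transitivity. In your case~2 you claim that, after conjugating $A_0$ to $\aa'_0$ and invoking Invariance on $A\dind_{CA_0}B$, you ``obtain $\aa'\nlocsep_{C\aa'_0}E/F'$.'' This step does not go through. The hypothesis $A\dind_{CA_0}B$ (after conjugation, $\sigma A\dind_{C\aa'_0}B$) is a \emph{global finiteness} statement about the collection of isomorphism types in $\Delta[tp(\sigma A/BC\aa'_0),C\aa'_0]_{(-)}$; it carries no information about whether the \emph{particular} realization $\aa'$ in the \emph{particular} witness $F'$ locally separates. You are attempting to convert a bound on isomorphism types of witnesses into a pointwise separation conclusion for a fixed $(\aa',F')$, and no such conversion is available from the definitions. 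The paper explicitly defers Partial Left-Transitivity precisely because it needs external input: Lemma~\ref{lemma:non-dev-ext-to-algebraicity} (every non-deviating completion of a deviating type is algebraic), which itself rests on Lemma~\ref{lemma:bad-seq-maker} and hence on the bounded-local-weight Lemma~\ref{lemma:bdd-local-wt} coming from efficient constructibility. The paper's argument then runs through algebraicity: from $\aa_1\aa_2\ndind_C B$ one obtains (via Existence and Lemma~\ref{lemma:non-dev-ext-to-algebraicity}) that $tp(\aa_1\aa_2/BC)$ is algebraic, and derives a contradiction from the two independence hypotheses together with preservation of algebraic dependence.

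A smaller issue: your Existence argument is not correct as stated. The implication ``$P\Rightarrow\neg P$'' is equivalent to $\neg P$, not to a contradiction; so when $B=C$ condition~(2) in the definition of $\Delta_0$ becomes ``every realization $\aa$ satisfies $\aa\nlocsep_C D/D'$,'' and $\Delta_0[p,C]_D$ need not be empty.
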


%%%%%----------------------------------------------------------------------------------------------------------------------------------------------------------------------------------------------%%%%%
%%%%%----------------------------------------------------------------------------------------------------------------------------------------------------------------------------------------------%%%%%
Both Invariance and Existence are self-evident from the definitions, and Preservation-of-algebraic-dependence for $\dind$ is an easy consequence of  Preservation-of-algebraic-dependence for $\downarrow$. Partial left-transitivity is somewhat more involved, so we defer its proof to later in this section; the proof wants for Lemma \ref{lemma:non-dev-ext-to-algebraicity} below.
\begin{proof}[Proof of Extension]
Let $A,B,B_1,C\subset_\fin M$ with $BC\subseteq B_1$, and suppose $A\dind_CB$. Let $\aa$ be an enumeration of $A$, and let $p(\xx) = tp(\aa/BC)$. By the $\aleph_0$-categoricity of $T = Th(\M)$, let $p_0,...,p_{n-1}\in S(B_1)$ be an enumeration of the complete extensions of $p$ to $B_1$. Towards a contradiction, suppose that for each $i<n$, there is some $B_1\subseteq D_i\subset_\fin M$ such that $\Delta[p_i,C]_{D_i}$ is infinite. Without loss of generality, we may assume that  $\Delta[p_i,C]_{D}$ is infinite for each $i<n$, where $D = D_0\cup \cdots\cup D_{n-1}$. It follows that $\Delta[p,C]_D$ is infinite, which contradicts the $A\dind_CB$. Hence, we may certainly choose some $p_i$ and a realization $\aa'\models p_i$ such that $\aa'\dind_CB_1$, as desired.
\end{proof}

\begin{proof}[Proof of Partial right-transitivity]
Let $A,B,B_0,C\subset_\fin M$ with $B_0\subseteq B$, and suppose $A\dind_CB$. We show that $A\dind_{CB_0}B$. Let $E\in \acl[\M]_D\setminus \Delta_0[\tp(\aa/BC),C]_D$, where $BC\subseteq D\subseteq E$ and $\aa$ enumerates $A$. We obtain an $\aa_1\in E^{|\aa|}$ such that $\aa_1\models \tp(\aa/BC)$, $\aa_1\downarrow_{BC}D/E$ and $\aa_1\downarrow_CD/E$. BY $\downarrow$-base-monotonicity, we have $\aa_1\downarrow_{CB_0}D/E$, so $\aa_1$ violates the condition, ``$\aa_1\downarrow_{CB_0}D/E\implies \aa_1\nlocsep_CD/E$.'' We have shown, then, that 
$$\acl[\M]_D\setminus \Delta_0[\tp(\aa/BC),C]_D\subseteq \acl[\M]_D\setminus \Delta_0[\tp(\aa/BC),CB_0]_D$$
and with $A\dind_CB$, it follows that $A\dind_{CB_0}B$, as desired. The proof that $A\dind_CB_0$ is similar, so we omit that portion.
\end{proof}

%\begin{proof}[Proof of Partial right-transitivity]
%Let $A,B,B_0,C\subset_\fin M$ with $B_0\subseteq B$, and suppose $A\dind_CB$. We show that $A\dind_CB_0$ and $A\dind_{CB_0}B$ in turns.  First, towards a contradiction, suppose $A\ndind_CB_0$ -- that is, there is some $CB_0\subseteq D\subset_\fin M$ such that $\Delta[p\r B_0C,C]_D$ is infinite, where $p = tp(\aa/BC)$ and $\aa$ is an enumeration of $A$. From observation \ref{obs:dev-containment}, we see that $\Delta[p,C]_{D\cup B}$ is infinite, so $A\ndind_CB$, a contradiction; thus, $A\dind_CB_0$.
%
%Again, for a contradiction, suppose $A\ndind_{CB_0}B$ -- again,  there is some $CB\subseteq D\subset_\fin M$ such that $\Delta[p,CB_0]_D$ is infinite, with $p$ as above. Using \ref{obs:dev-containment} again, we see that $\Delta[p,CB_0]_D\subseteq \Delta[p,C]_D$, so the latter is infinite and $A\ndind_CB$ -- a contradiction; thus, $A\dind_{CB_0}B$.
%\end{proof}

%
%\begin{proof}[Proof of Partial left-transitivity]
%\end{proof}

%%%%%----------------------------------------------------------------------------------------------------------------------------------------------------------------------------------------------%%%%%
\begin{lemma}\label{lemma:bad-seq-maker}
Let $A,B,C\subset_\fin M$ where $|A|, |B|\leq r$, and suppose $A\dind_CB$ but $B\ndind_CA$. Then $B\nsubseteq \acl(AC)$.
\end{lemma}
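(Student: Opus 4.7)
I plan to argue by contrapositive: assume $A\dind_CB$ and $B\subseteq\acl(AC)$, and deduce $B\dind_CA$. The first step is to upgrade the hypothesis $B\subseteq\acl(AC)$ to the much stronger $B\subseteq\acl(C)$. From Preservation of algebraic dependence for $\dind$ (already established in Proposition \ref{prop:properties-without-bdd-loc-wt}), $A\dind_CB$ yields $\acl(AC)\cap\acl(BC)=\acl(C)$. Since $BC\subseteq\acl(AC)$ by hypothesis, $\acl(BC)\subseteq\acl(AC)$, and so $\acl(BC)\subseteq\acl(AC)\cap\acl(BC)=\acl(C)$, whence $B\subseteq\acl(C)$.

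With $B\subseteq\acl(C)$ in hand, I will show the stronger fact that $\Delta_0[\tp(\bb/AC),C]_D$ is actually \emph{empty} for every $D\supseteq ABC$, which certainly yields $B\dind_CA$. Fix such a $D$ and any $D\subseteq D'\in\acl[\M]$, and write $p=\tp(\bb/AC)$. Because $\bb$ lies in $\acl(C)$ and $p$ extends $\tp(\bb/C)$, every realization $\bb'\models p$ in $D'$ also lies in $\acl(C)$; in particular, for every $C\subseteq C'\subseteq\acl(D)$,
$$\{\star\}{\times}{(\bb')}^r\subseteq\{\star\}{\times}\acl(C')^r\subseteq\hdesc(\acl(C');D').$$
Appealing to the elementary property of $[-\amalg-|-]$ that any set of vertices contained in the conditioning set is trivially $d$-separated from everything else (see Appendix \ref{app:dags}), I conclude that both $\bb'\downarrow_CD/D'$ and $\bb'\downarrow_{AC}D/D'$ hold simultaneously. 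The defining implication for membership in $\Delta_0[p,C]_D$, namely $\bb'\downarrow_{AC}D/D'\implies\bb'\nlocsep_CD/D'$, then has a true hypothesis and a false conclusion at every realization $\bb'$; so $D'\notin\Delta_0[p,C]_D$, as claimed.

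The only real point of care is verifying the $d$-separation triviality invoked above -- namely that having the source set inside the conditioning set forces the $[-\amalg-|-]$ relation -- against the precise definition adopted in the appendix. This is the standard behaviour for any reasonable formulation of $d$-separation (e.g.\ by reducing to the disjoint-sets case where $X\setminus Z=\emptyset$), but it is the sole non-definitional ingredient of the argument; everything else is just careful unpacking of $\dind$, $\downarrow$, and $\hdesc$. The cardinality hypothesis $|A|,|B|\leq r$ plays no real role beyond ensuring that $\bb'$ fits naturally inside $r$-tuples so that the vertex set $\{\star\}{\times}(\bb')^r$ is well-defined in $CG[D']$.
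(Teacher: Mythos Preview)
Your argument is correct and takes a genuinely different, more elementary route than the paper. The key observation you make---that $A\dind_CB$ together with $B\subseteq\acl(AC)$ already forces $B\subseteq\acl(C)$ via Preservation of algebraic dependence---short-circuits the entire machinery the paper invokes. Once $B\subseteq\acl(C)$, your direct verification that every realization $\bb'$ of $\tp(\bb/AC)$ lands inside $\hdesc(\acl(C');D')$, together with the triviality of $d$-separation when the source set lies in the conditioning set (which does follow from the paper's definition: $X\subseteq Z$ forces $X\setminus Z=\emptyset$ and $X\cap Y\subseteq Z$), gives $\Delta_0[\tp(\bb/AC),C]_D=\emptyset$ and hence $B\dind_CA$.

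By contrast, the paper proceeds by contradiction and actually builds an infinite $\acl(D\bb)$-indiscernible sequence $(\aa_i)_{i<\omega}$ of conjugates of $\aa$ (using Extension and Invariance repeatedly), then applies Ramsey and $\aleph_0$-categoricity to produce a configuration that contradicts Lemma~\ref{lemma:bdd-local-wt}. That lemma is the one place where efficient constructibility (via Observation~\ref{obs:def-of-CGs}) genuinely enters the analysis, so the paper's proof of this lemma is not self-contained within Proposition~\ref{prop:properties-without-bdd-loc-wt}, whereas yours is. What the paper's approach buys is the \emph{Construction} itself: the sequence $(\aa_i)_{i<\omega}$ and the accompanying chain $(E_i)_{i<\omega}$ are explicitly reused in the proof of Symmetry (the final theorem of Section~\ref{sec:dev-and-d-indep}), where the text says ``using the construction from the proof of Lemma~\ref{lemma:bad-seq-maker}.'' Your shortcut proves the lemma but leaves that construction to be introduced independently when Symmetry is argued.
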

\begin{proof}
Towards a contradiction, suppose $B\subseteq \acl(AC)$. Let $\aa$ and $\bb$ be enumerations of $A$ and $B$, respectively. From the hypothesis $B\ndind_CA$, we may take $\Delta[p(\aa,\yy),C]_{D\aa}$ to be infinite, where $p = tp(\aa\bb/C)$ and $C\subseteq D\subset_\fin M\setminus A$. From preservation of algebraic dependence, we know that $B\ndind_{AC}AD$. Applying Extension and Invariance repeatedly, we implement the following construction:

\noindent\emph{Construction}:
\begin{itemize}
\item[] Stage 0: Set $\aa_0 = \aa$
\item[] Stage $i{+}1$: At the beginning of stage $i+1$, we have $\aa_0,...,\aa_i$, pairwise distinct, such that for each $j\leq i$,
\begin{itemize}
\item $\aa_j\equiv_{BD}\aa_0$;
\item $\aa_j\dind_{BD}\,D\bb\aa_0...\aa_{j-1}$.
\end{itemize}

We then choose $\aa_{i+1}\equiv_{D\bb\aa_0...\aa_{i-1}}\aa_i$ such that $\aa_{i+1}\dind_{BD}\,D\bb\aa_0...\aa_i$.
\end{itemize}
Since $\Delta[p(\xx,\bb),C]_E$ is finite for almost every $E$ containing $C\bb$ (up to automorphisms over $C\bb$), and applying Ramsey's theorem and the $\aleph_0$-categoricity of $T$, we may also choose a chain $E_0\subseteq E_1\subseteq\cdots\subseteq E_i\subseteq\cdots \subset_\fin M$ of algebraically closed sets such that 
$$\left[ (\star,\aa_i)\amalg (\star,\aa_j)\,|\,\hdesc(\acl(D\bb);E_k\right]_{CG[E_k]}$$
 and 
 $$\left[(\star,\aa_i)\namalg\,(\star,\bb)\,|\,\hdesc(\acl(D);E_k) \right]_{CG[E_k]}$$
  whenever $i<j<k<\omega$. This contradicts Lemma \ref{lemma:bdd-local-wt}, so this lemma is proven.
\end{proof}

Lemma \ref{lemma:non-dev-ext-to-algebraicity} now follows from Lemma \ref{lemma:bad-seq-maker} and Observation \ref{obs:alg-extensions}.

%%%%%----------------------------------------------------------------------------------------------------------------------------------------------------------------------------------------------%%%%%
\begin{lemma}\label{lemma:non-dev-ext-to-algebraicity}
Let $C\subset_\fin M$ and $\aa,\bb\in M^r$, and let $p(\xx,\yy) = tp(\aa\bb/C)$. Let $C\bb\subseteq D\subset_\fin M$, and suppose $\Delta[p(\xx,\bb),C]_D$ is infinite. If $q(\xx,\bb)$ is a complete extension of $p(\xx,\bb)$ to $D$ which does not deviate over $C\bb$, then $q(\xx,\bb)$ is algebraic.
\end{lemma}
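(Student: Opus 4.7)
The plan is to reduce algebraicity of $q(\xx,\bb)$ to algebraicity of $p(\xx,\bb)=\tp(\aa/C\bb)$ using Preservation of Algebraic Dependence, and then rule out non-algebraicity by running the same kind of bad-sequence construction that underlies the proof of Lemma \ref{lemma:bad-seq-maker}.

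First I would fix any $\aa'\models q(\xx,\bb)$. Since $q$ is hypothesized not to deviate over $C\bb$, we have $\aa'\dind_{C\bb}D$, and by Preservation of Algebraic Dependence from Proposition \ref{prop:properties-without-bdd-loc-wt} this gives $\acl(\aa'C\bb)\cap\acl(D)=\acl(C\bb)$. In particular, $\aa'\in\acl(D)$ if and only if $\aa'\in\acl(C\bb)$, so $q$ is algebraic exactly when $p(\xx,\bb)$ is. This reduces the lemma to showing $p(\xx,\bb)$ is algebraic.

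Next, from the hypothesis that $\Delta[p(\xx,\bb),C]_D$ is infinite we immediately read off $\aa\ndind_C\bb$, and since $\aa'\bb\equiv_C\aa\bb$ for every realization of $q$, invariance yields $\aa'\ndind_C\bb$ as well. Suppose for contradiction that $p(\xx,\bb)$ is not algebraic; Observation \ref{obs:alg-extensions}, applied in its contrapositive form to the (complete) type $p(\xx,\bb)$, is what guarantees that the realization set of $p(\xx,\bb)$ is genuinely infinite — supplying an unbounded pool of distinct realizations $\aa'_0=\aa',\aa'_1,\aa'_2,\ldots$ of $q$, each satisfying $\aa'_i\dind_{C\bb}D$ and $\aa'_i\ndind_C\bb$. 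Using Extension of $\dind$ repeatedly, exactly as in the construction phase of the proof of Lemma \ref{lemma:bad-seq-maker}, I would arrange these $\aa'_i$ inside a single algebraically closed ambient finite $E\subset_\fin\M$ so that all of them remain conjugate over $D$ and the individual non-deviation and deviation conditions survive in $E$.

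The concluding step is to translate the $\dind$- and $\ndind$-conditions into the d-separation language of the construction graph $CG[E]$: the pairwise independence of the $\aa'_i$ over $D\bb$ (inherited from $\aa'_i\dind_{C\bb}D$) will give the pairwise d-separation statements
\[
\bigl[(\star,\aa'_i)\,\amalg\,(\star,\aa'_j)\,\bigm|\,\hdesc(\acl(D\bb);E)\bigr]_{CG[E]},
\]
while $\aa'_i\ndind_C\bb$ yields
\[
\bigl[(\star,\aa'_i)\,\namalg\,(\star,\bb)\,\bigm|\,\hdesc(\acl(D);E)\bigr]_{CG[E]}.
\]
Taking the sequence of length $n\geq\loc(r,|D\bb|)$, Lemma \ref{lemma:bdd-local-wt} forces some $(\star,\aa'_i)$ to be d-separated from $\{\star\}\times\bb^r$ modulo $\hdesc(\acl(D);E)$, contradicting the second displayed statement. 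Hence $p(\xx,\bb)$ must be algebraic, and by the reduction $q$ is algebraic.

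The main obstacle is this last translation step: one needs to verify carefully that the $\dind$-witnesses at the deviation level actually produce the correct d-separation statements in a single common graph $CG[E]$, and that the Extension moves preserve both the non-deviation of each $\aa'_i$ over $C\bb$ and the witnessing deviation of each $\aa'_i$ over $C$ against $\bb$. Once this bookkeeping matches the framework of Lemma \ref{lemma:bdd-local-wt}, the contradiction is essentially the same one that drives Lemma \ref{lemma:bad-seq-maker}.
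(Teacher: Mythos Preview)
Your approach is essentially the paper's: the paper's entire proof is the one-line remark that the lemma follows from Lemma~\ref{lemma:bad-seq-maker} and Observation~\ref{obs:alg-extensions}, meaning one reruns the bad-sequence construction of Lemma~\ref{lemma:bad-seq-maker} to contradict Lemma~\ref{lemma:bdd-local-wt}, which is exactly your plan (and your reduction via Preservation of Algebraic Dependence is a clean way to set this up). One cosmetic note: your invocation of Observation~\ref{obs:alg-extensions} is unnecessary here, since $p(\xx,\bb)$ is already a complete type over $C\bb$ and so its non-algebraicity directly yields infinitely many realizations.
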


\begin{proof}[Proof of Partial left-transitivity]
Assume $\aa_1\dind_CB$ and $\aa_2\dind_{C\aa_1}B$, and towards a contradiction, suppose $\aa_1\aa_2\ndind_CB$, where $\aa_1,\aa_2\in M^{<\omega}$ and $BC\subset_\fin M$.  By a straightforward argument using the Extension, we may work under the assumption that $\Delta[tp(\aa_1\aa_2/BC),C]_{BC}$ is infinite. By Existence and the preceding lemma, we know that $tp(\aa_1\aa_2/BC)$ must be algebraic -- i.e. $\aa_1\aa_2\in \acl(BC)$. For the contradiction, then, we will derive that $tp(\aa_1\aa_2/BC)$ is non-algebraic. 

For a (local) contradiction, suppose $tp(\aa_1\aa_2/BC)$ is algebraic. From preservation of algebraic dependence, one can show that $\aa_1\aa_2\ndind_CB$ implies $\aa_1\aa_2\notin \acl(C)$, and it follows that  $tp(\aa_1/C)$ and $tp(\aa_2/C\aa_1)$ are not {\em both} algebraic. Similarly, as $tp(\aa_1\aa_2/BC)$ is algebraic, it cannot be that both $tp(\aa_1/BC)$ and $tp(\aa_2/BC\aa_1)$ are non-algebraic. If $tp(\aa_1/BC)$ is algebraic, then we have $\aa_1\ndind_CB$, and if $tp(\aa_2/BC\aa_1)$ is algebraic, then $\aa_2\ndind_{C\aa_1}B$ -- in either case, a contradiction. Thus, $tp(\aa_1\aa_2/BC)$ must be non-algebraic -- the higher level contradiction -- which completes the proof.
\end{proof}

%%%%%----------------------------------------------------------------------------------------------------------------------------------------------------------------------------------------------%%%%%
\begin{thm}
$\dind$ is symmetric and fully transitive. With Proposition \ref{prop:properties-without-bdd-loc-wt}, $\dind$ is an independence relation for $K$, so $K$ is rosy.
\end{thm}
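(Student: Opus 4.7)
By the theorem recalled at the end of Section~1 from \cite{me-2011a}, $K$ is rosy as soon as $T^{\lim}$ admits a notion of independence satisfying Symmetry and Transitivity. Proposition~\ref{prop:properties-without-bdd-loc-wt} has already supplied every other axiom (Invariance, Preservation of algebraic dependence, Existence, Extension, and Partial left- and right-transitivity), so two tasks remain: Symmetry, and the upgrade from Partial to full Transitivity.

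\emph{Transitivity assuming Symmetry.} The forward half of full Transitivity is exactly Partial right-transitivity. For the converse, suppose $C_0\subseteq C\subseteq B$ with $A\dind_{C_0}C$ and $A\dind_CB$. Applying Symmetry twice yields $C\dind_{C_0}A$ and $B\dind_CA$. Invoking Partial left-transitivity with $B$ in the role of ``$A$'', $C$ in the role of ``$A_0$'' (legitimate since $C\subseteq B$), and $C_0$ as the base produces $B\dind_{C_0}A$, and a final application of Symmetry returns $A\dind_{C_0}B$.

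\emph{Symmetry.} The plan is to argue by contradiction, reducing the general failure to the bounded/algebraic case already handled by Lemma~\ref{lemma:bad-seq-maker}, which delivers symmetry whenever the tuples involved have size at most $r$ and the right-hand side lies inside $\acl$ of the left-hand side with the base. Suppose $A\dind_CB$ but $B\ndind_CA$. Fix an enumeration $\bb$ of $B$ and some $D\supseteq ABC$ with $\Delta[tp(\bb/AC),C]_D$ infinite. By $\aleph_0$-categoricity and Ramsey, I extract an infinite sequence $(\bb_i)_{i<\omega}$ of realizations of $tp(\bb/AC)$ sitting in an algebraically closed $E\supseteq D$, indiscernible over $AC$, such that $\bb_i\downarrow_{AC}E/E$ yet $\bb_i\nlocsep_CE/E$ for every $i$. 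Next, apply Extension to $A\dind_CB$ to find $A'\equiv_{BC}A$ with $A'\dind_CE$; via Invariance, transfer the whole configuration through the automorphism fixing $BC$ that sends $A$ to $A'$, so we may assume throughout that $A\dind_CE$. I would then apply Lemma~\ref{lemma:bdd-local-wt} to an appropriate selection of $r$-tuples drawn from the $\bb_i$: the bound on local weight forces all but finitely many of those $r$-tuples to be $d$-connected to $\{\star\}\times A^r$ through $\hdesc(C;E)$, i.e.\ to live inside $\acl(AC)$. At that point the hypothesis of Lemma~\ref{lemma:bad-seq-maker} is met on an $r$-subtuple of $\bb_i$, yielding the desired contradiction.

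The principal obstacle is the final collapse: genuinely pushing cofinitely many (reductions of) $\bb_i$ into $\acl(AC)$ without disturbing either $A\dind_CE$ or the failure of local separation. I expect this to be handled by precisely the same kind of bookkeeping that drives the proof of Lemma~\ref{lemma:bad-seq-maker} itself -- an indiscernible-sequence extraction combined with the bounded-locality of Lemma~\ref{lemma:bdd-local-wt} -- and to rely essentially on the definability of construction graphs recorded in Observation~\ref{obs:def-of-CGs}, which is what makes those tools applicable here. With Symmetry and full Transitivity in hand, $\dind$ is a notion of independence with Symmetry and Transitivity, and the cited theorem from Section~1 concludes that $K$ is rosy.
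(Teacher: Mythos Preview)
Your reduction of full Transitivity to Symmetry via Partial left- and right-transitivity is exactly what the paper does (there in one sentence), and your derivation is correct.

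The Symmetry argument, however, has real gaps. First, your extraction of $(\bb_i)_{i<\omega}$ is not supported by the definition of deviation: $\Delta[tp(\bb/AC),C]_D$ being infinite means there are infinitely many isomorphism types of $E\supseteq D$ in which \emph{every} realization $\bb'$ of $tp(\bb/AC)$ satisfies the implication $\bb'\downarrow_{AC}D/E\Rightarrow\bb'\nlocsep_CD/E$. It does not hand you infinitely many realizations inside a single $E$, and it certainly does not promise any realization simultaneously satisfying $\downarrow_{AC}$ and $\nlocsep_C$ --- only the conditional. Second, your reading of Lemma~\ref{lemma:bdd-local-wt} is wrong: that lemma outputs some $\cc_i$ which is $d$-\emph{separated} from $\{\star\}\times A^r$ over $\hdesc(B;-)$; its contrapositive says the remaining $\cc_i$ fail pairwise $d$-separation over $\hdesc(AB;-)$, not that they lie in $\acl(AC)$. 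The ``i.e.'' equating $d$-connection with membership in $\acl(AC)$ is simply false, so the plan to feed the conclusion back into Lemma~\ref{lemma:bad-seq-maker} does not close.

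The paper's route is different and pivots on Lemma~\ref{lemma:non-dev-ext-to-algebraicity}, which your sketch omits entirely. From $\aa\dind_C\bb$ and $\bb\ndind_C\aa$ one first applies Lemma~\ref{lemma:bad-seq-maker} to conclude $tp(\bb/C\aa)$ is non-algebraic; hence it has a non-algebraic complete extension $q$ to $D\aa$, and Lemma~\ref{lemma:non-dev-ext-to-algebraicity} forces $q$ to deviate over $C\aa$. Only then does the paper rerun the construction from the proof of Lemma~\ref{lemma:bad-seq-maker} (now over the enlarged base $D'\aa$) to produce a sequence $(\bb_i,E_i)_i$, apply Ramsey to the conditional $\bb_i\downarrow_{D\aa}D'\aa/E_j\Rightarrow\bb_i\nlocsep_{C\aa}D'\aa/E_j$ to split into two cases, and in either branch contradict Lemma~\ref{lemma:bdd-local-wt} directly. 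The key move you are missing is that passing through Lemma~\ref{lemma:non-dev-ext-to-algebraicity} upgrades the raw failure $\bb\ndind_C\aa$ to a deviation \emph{over the larger base $C\aa$}, which is precisely what aligns the situation with the hypotheses of the bounded-local-weight lemma.
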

\begin{proof}
By partial right- and left-transitivity, we need only demonstrate symmetry. Let $C\subset_\fin M$ and $\aa,\bb\in M^{<\omega}$, and assume $\aa\dind_C\bb$. Towards a contradiction, we suppose that $\bb\ndind_C\aa$ -- that is, let $C\subseteq D\subset_\fin M\setminus rng(\aa)$ such that $\Delta[tp(\bb/C\aa),C]_{D\aa}$ is infinite. By lemma \ref{lemma:bad-seq-maker}, we know that $p(\aa,\yy) = tp(\bb/C\aa)$ is non-algebraic, so $p(\aa,\yy)$ has a non-algebraic extension $q(\aa,\yy)$ to $D\aa$, which by lemma \ref{lemma:non-dev-ext-to-algebraicity}, must deviate over $C\aa$. Thus, we can choose $D\subseteq D'\subset_\fin M\setminus rng(\aa)$ such that $\Delta[q(\aa,\yy),C\aa]_{D\aa}$ is infinite. Using the construction from the proof of lemma  \ref{lemma:bad-seq-maker}, we obtain a sequence $(\bb_i,E_i)_{i<\omega}$ such that $(\bb_i)_{i<\omega}$ is $\acl(D'\aa)$-indiscernible, $\bb_0 \equiv_{D'\aa}\bb$ and for every $i<\omega$, 
$\bb_i\dind_{D'\aa}D'\aa\bb_0...\bb_{i-1}$ and 
$$\bb_i\downarrow_{D\aa}D'\aa/E_j\,\,\implies \bb_i\nlocsep_{C\aa}D'\aa/E_j$$
whenever $i<j<\omega$. By Ramsey's theorem, again, we may then assume that either $\bb_i\nlocsep_{D\aa}D'\aa/E_j$ for all $i<j<\omega$, of $\bb_i\nlocsep_{C\aa}D'\aa/E_j$ for all $i<j<\omega$. In either case, we derive a contradiction to lemma  \ref{lemma:bdd-local-wt}.
\end{proof}

\bibliographystyle{plain}
\bibliography{myref}

\bigskip

%%%%%%%%%%%%%%%%%%%%%%%%%%%%%%%%%%%%%%%%%%%%%%%%%%%%%%%%%%%%%%%
%%%%%%%%%%%%%%%%%%%%%%%%%%%%%%%%%%%%%%%%%%%%%%%%%%%%%%%%%%%%%%%
%%%%%%%%%%%%%%%%%%%%%%%%%%%%%%%%%%%%%%%%%%%%%%%%%%%%%%%%%%%%%%%
%%%%%%%%%%%%%%%%%%%%%%%%%%%%%%%%%%%%%%%%%%%%%%%%%%%%%%%%%%%%%%%
%%%%%%%%%%%%%%%%%%%%%%%%%%%%%%%%%%%%%%%%%%%%%%%%%%%%%%%%%%%%%%%
\appendix
\section{The complete invariant for $L^k$ and game tableaux}\label{app1}

In this subsection, we introduce both the complete invariant $I^k$ for $k$-variable theories of finite $\rho$-structures and the notion of a game tableau for the $L^k$-theory of a fixed finite $\rho$-structure. The latter is key in much of our analysis of computational problems around finite-variable logic. The invariant is not itself terribly useful in this work, but as well as providing the starting point for game tableaux, this is the finite-encoding of $L^k$-theories mentioned in the introduction. The material on the complete invariant can be found in \cite{libkin-FMTbook-2004} or \cite{otto-1997}, and the material on game-tableaux can be found (with non-trivial differences) in \cite{otto-1997}, where it is deployed only in relation to the 2-variable logic.

Suppose $\M$ is a finite $\rho$-structure. Then, the quotient structure $M^k/{\equiv^k}$ is essentially synonymous with the set $S^k_k(T)$ of $k$-variable $k$-types of $T$, where $T = Th^k(\M)$ is the complete $k$-variable theory of $\M$. Moreover, if the set of quantifer-free $k$-types of $\rho$ is endowed ({\em a priori} but arbitrarily) with a linear order, then there is an $\equiv^k$-invariant algorithm computing a mapping
$$\fin[\rho]\longrightarrow\fin[\{<\}]:\N\mapsto (N^k/{\equiv^k},<^\N)$$
where $(N^k/{\equiv^k},<^\N)$ is a linear order;\footnote{\,$\equiv^k$-Invariance, here, means that if $\M\equiv^k\N$, then (i) $(M^k/{\equiv^k},<^\M)$ and $(N^k/{\equiv^k},<^\N)$ have the same (finite) length, and (ii) if $\aa\in M^k$ and $\bb\in N^k$ are such that $tp^k(\aa;\M) = tp^k(\bb;\N)$, then $\aa/{\equiv^k}$ and $\bb/{\equiv^k}$ have the same position in the orderings.} this algorithm has running-time  $|N|^{O(k)}$ (see \cite{otto-1997} or \cite{libkin-FMTbook-2004}). Now, for each complete quantifier-free $k$-type $\theta(x_1,...,x_k)$ of $\rho$, let $V_\theta^{(1)}$ be a new unary predicate symbol; for each permutation $\sigma\in\emph{Sym}\,[k]$, let $P_\sigma^{(2)}$ be a new binary predicate symbol; and let $\emph{Acc}^{(2)}$ be an additional unary predicate symbol. Let $\rho^\inv$ be the signature consisting of these symbols together with the binary relation symbol $<$. 
Given a finite $\rho$-structure $\M$, then, we define $I^k(\M)$ to be a $\rho^\inv$-structure with universe $M^k/\equiv^k$ as follows:

\begin{itemize}
\item $<^{I^k(\M)}$ is the linear order, $<^\M$, of $M^k/{\equiv^k}$ described above.
\item $V_\theta^{{I^k}(\M)} = \left\{\aa/{\equiv^k}:\aa\in M^k,\,\M\models\theta(\aa)\right\}$ for each quantifier-free $k$-type $\theta$.
\item For $\sigma\in\emph{Sym}\,[k]$, we put $(\aa/{\equiv^k},\bb/{\equiv^k})\in P_\sigma^{{I^k}(\M)}$ just in case,
$$(\M,(a_{\sigma(1)},...,a_{\sigma(k)}))\equiv^k(\M,\bb).$$
\item For $\aa,\bb\in M^k$, we put
$(\aa/{\equiv^k},\bb/{\equiv^k})\in \emph{Acc}^{{I^k}(\M)}$ just in case there is an element $m\in M$, such that 
$$(\M,(m,a_2,...,a_k))\equiv^k(\M,\bb).$$
Equivalently, $(\aa/{\equiv^k},\bb/{\equiv^k})\in \emph{Acc}^{{I^k}(\M)}$ if for every $(a'_1,...,a'_k)\in\aa/{\equiv^k}$, there is an $m'\in M$ such that 
$$(\M,(m',a'_2,...,a'_k))\equiv^k(\M,\bb).$$
\end{itemize}
This operator $I^k(-)$ is known in the literature as the \emph{complete invariant} for $k$-variable logic.

\begin{thm}(\cite{otto-1997})\label{thm:complete-invariant}
Let $\M$ and $\N$ be finite $\rho$-structures. Then $\M\equiv^k\N$ if and only if $I^k(\M)\cong I^k(\N)$. Moreover, the mapping $\M\mapsto I^k(\M)$ is computable by a relational Turing machine with running-time $|M|^{O(k)}$.
\end{thm}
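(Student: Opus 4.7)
The plan is to prove the two halves of the equivalence separately---with the characterization of $\equiv^k$ via the $k$-pebble Ehrenfeucht--Fra\"iss\'e game doing the heavy lifting for the harder direction---and then to analyze the running time of the construction of $I^k$.

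For the forward direction $\M\equiv^k\N\Rightarrow I^k(\M)\cong I^k(\N)$, I would exhibit the candidate isomorphism $h\colon M^k/{\equiv^k}\to N^k/{\equiv^k}$ by sending each $\equiv^k$-class on the $\M$-side to the unique $\equiv^k$-class on the $\N$-side that realizes the same $k$-variable $k$-type; this is well-defined and bijective precisely because $\M\equiv^k\N$ means the two structures realize exactly the same $L^k$-types. I would then verify that $h$ transports each piece of the $\rho^{\inv}$-structure: the $V_\theta$-predicates because quantifier-free type is part of the $L^k$-type; the $P_\sigma$-predicates because reordering the displayed variables is a purely syntactic move inside $L^k$; the $\mathrm{Acc}$-relation because $\exists x_1\,\phi(x_1,\ldots,x_k)$ for a type-formula $\phi$ still lies in $L^k$; and the order $<$ because the canonical ordering algorithm was designed to be $\equiv^k$-invariant.

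For the backward direction $I^k(\M)\cong I^k(\N)\Rightarrow \M\equiv^k\N$, I would appeal to the standard characterization that $\M\equiv^k\N$ iff Duplicator wins the infinite $k$-pebble game on $(\M,\N)$. Given an isomorphism $h\colon I^k(\M)\to I^k(\N)$, Duplicator's strategy is to maintain the invariant that if $\aa\in M^k$ lists the currently pebbled elements (with repetitions for unused pebbles), then the mirror tuple $\bb\in N^k$ satisfies $h(\aa/{\equiv^k})=\bb/{\equiv^k}$. Preservation of the $V_\theta$ predicates under $h$ ensures the pebbled positions form a partial isomorphism at every round, the $\mathrm{Acc}$-relation tells Duplicator how to respond when Spoiler moves the first pebble, and composing with a $P_\sigma$-edge reduces any pebble-$i$ move to a pebble-$1$ move. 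The \textbf{main obstacle} is a careful bookkeeping verification that the combined $P_\sigma$--$\mathrm{Acc}$ machinery really covers every single-pebble move available to Spoiler, and that Duplicator can actually exhibit concrete witnesses $\bb'$ on the $\N$-side after each round rather than only knowing their $\equiv^k$-class; the latter follows by picking any representative of the target class that agrees with $\bb$ on the unchanged coordinates, whose existence is exactly what $\mathrm{Acc}$ asserts.

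For the complexity claim, I would observe that computing $M^k/{\equiv^k}$ reduces to the classical $k$-variable type-refinement procedure: start with the partition of $M^k$ by complete quantifier-free $k$-types, then repeatedly refine by splitting any class some of whose members can reach classes that other members of the same class cannot (via a single coordinate change, for any coordinate). Each refinement round costs $|M|^{O(k)}$ time, and the number of rounds is bounded by the final number of classes, itself at most $|M|^k$, giving the asserted $|M|^{O(k)}$ total. The predicates $V_\theta$, $P_\sigma$, and $\mathrm{Acc}$ are then read off by brute force within the same bound, and the canonical order on classes is obtained $\equiv^k$-invariantly by ordering first by common quantifier-free type and then by the refinement round at which each class was separated from others. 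Because all of these operations are invariant under automorphisms of the input structure, they lift to a relational Turing machine computation of the advertised running time.
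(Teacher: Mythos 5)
The paper does not actually prove this theorem; it cites it from Otto (1997) (and points to Libkin's FMT book as an alternative source), so there is no internal proof to compare against. Your proposal is the standard argument for this result and is essentially the one found in those references: the forward direction by transporting the realized type space (using that $\M\equiv^k\N$ forces $S^k_k(\M)=S^k_k(\N)$, as the paper itself notes in \S1.1), the backward direction via Duplicator's winning strategy in the $k$-pebble game maintaining the invariant $h(\aa/{\equiv^k})=\bb/{\equiv^k}$, with $P_\sigma$ reducing an arbitrary pebble move to a first-pebble move and the ``for every representative'' form of $\mathrm{Acc}$ supplying concrete witnesses, and the complexity bound via the classical type-refinement procedure (a $k$-ary analogue of colour refinement). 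The one place where your sketch is underspecified is the construction of the $\equiv^k$-invariant total order on $M^k/{\equiv^k}$: ordering first by quantifier-free type and then by the round of separation is not in general a total order, since two classes can share a quantifier-free type and split off from a common parent at the same round. The usual fix --- and what the cited sources do --- is to maintain a canonical order \emph{throughout} the refinement, breaking ties at each round by comparing (say, lexicographically) the ordered vector of predecessor colours reachable from each class under the accessibility relation; this keeps the order $\equiv^k$-invariant at every stage. That is a repairable bookkeeping detail rather than a conceptual gap; with it supplied, your argument is correct and matches the standard treatment.
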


We note that since $I^k(\M)$ is linearly ordered, there is a canonically isomorphic $\rho^\inv$-structure with universe $[n]:=\{1,...,n\}$, where $n=|I^k(\M)|<\omega$, in which the linear order is the standard one, and this transformation is computable in polynomial-time. Thus, it is not terribly abusive to write $I^k(\M)=I^k(\N)$ instead of $I^k(\M)\cong I^k(\N)$, with the understanding that we pass to this canonical/standard model.

%%%%%%%%%%%%%%%%%%%%%%%%%%%%%%%%%%%%%%%%%%%%%%%%%%%%%%%%%%%%%
%%%%%%%%%%%%%%%%%%%%%%%%%%%%%%%%%%%%%%%%%%%%%%%%%%%%%%%%%%%%%
\subsection{Game tableaux and the amalgamation theorem}

The complete invariant carries much unnecessary information, which has the tendency to obscure what is really essential for analyzing the Strong $L^k$-Canonization Problem. Thus, we reduce the complete invariant to a less informative structure called a {\em game-tableau theory}. As we have noted before, the quotient set $M^k/{\equiv^k}$ is essentially synonymous with the set $S^k_k(T)$ of $k$-variable $k$-types of the theory $T = Th^k(\M)$. 
Moreover, the \emph{accessibility relation} between $k$-variable $k$-types of $\M$ is an invariant of $T$. That is, we may consider $\emph{Acc}\subseteq S^k_k(T)\times S^k_k(T)$ such that for any $\rho$-structure $\N$, if $\N\models T$, $\bb\in N^k$, $p = tp^k(\bb;\N)$ and $(p,q)\in \emph{Acc}$, then there is a $b'\in N$, such that $tp^k(b',b_2,...,b_k;\N)=q$, and moreover, if $b''\in N$ and $q'=tp^k(b'',b_2,...,b_k;\N)$, then $(p,q')\in \emph{Acc}$.
Similarly, if $\sigma\in\emph{Sym}\,[k]$ and $tp^k(\aa;\M)=tp^k(\bb;\N)$, then 
$$tp^k(a_{\sigma(1)},...,a_{\sigma(k)};\M)=tp^k(b_{\sigma(1)},...,b_{\sigma(k)};\N).$$
In fact, as we shall shortly see, these facts together with some types in the language of equality effectively determine the class of models of the theory $T$.

Let $\M_0$ be a fixed finite $\rho$-structure, and let $T = Th^k(\M_0)$. We enumerate $S^k_k(T)$ by $\alpha_1(\xx),...,\alpha_N(\xx)$ ordered according to $I^k(\M_0)$, where $\xx = (x_1,...,x_k)$ is a tuple of pairwise distinct variables. For each $\alpha\in S^k_k(T)$, let $R_\alpha^{(k)}$ be a $k$-ary relation symbol, and let $\mu_\alpha(\xx)$ be the unique complete (quantifier-free) $k$-type in the language of equality such that $T\models\forall \xx(\alpha(\xx)\cond \mu_\alpha(\xx))$. Let $\rho^G=\left\{R_\alpha:\alpha\in S^k_k(T)\right\}$. We define $T^G$ to be the theory in the language of $\rho^G$ consisting of the following assertions (which obviously comprise an $\forall\exists$-theory):
\begin{enumerate}
\item[] G1: 
$\forall x_1...x_k\bigvee_\alpha\left(R_\alpha(\xx)\wedge\neg\bigvee_{\beta\neq\alpha}R_\beta(\xx)\right)$

\item[] G2: The ``type'' $R_\alpha$ of a $k$-tuple matches the equality type of the genuine type $\alpha$:
 $$\bigwedge_\alpha\forall x_1...x_k\left(R_\alpha(\xx)\cond\mu_\alpha(\xx)\right).$$
 
\item[] G3: $\bigwedge_{\sigma\in \emph{Sym}\,[k]}\bigwedge_\alpha\forall x_1...x_k\left(R_\alpha(\xx)\bic R_{\alpha^\sigma}(x_{\sigma(1)},...,x_{\sigma(k)})\right)$.

We write $\alpha^\sigma$ for the unique type $\beta$ such that 
$$T\models \forall x_1...x_k\left(\alpha(\xx)\bic \beta(x_{\sigma(1)},...,x_{\sigma(k)})\right).$$

\item[] G4: $\forall x_1...x_k\forall y\left( R_\alpha(\xx)\cond \bigvee_{\beta\in \emph{Acc}(\alpha,-)}R_\beta(y,x_2,...,x_k)\right)$

\item[] G5: $\bigwedge_{\alpha}\exists x_1...x_k\left(R_\alpha(\xx)\right)$

\item[] G6: $\bigwedge_\alpha\bigwedge_{\beta\in \emph{Acc}(\alpha,-)} \forall x_1...x_k\left(R_\alpha(\xx)\cond \exists y\left(R_\beta(y,x_2,...x_k)\right)\right)$
\end{enumerate}

There is a pair of transformations, computable in relational polynomial-time,  
$$-^G:\fin[T]\longrightarrow \fin[T^G],\,-^{\mod}:\fin[T^G]\longrightarrow\fin[T]$$
which completely characterize the relationship between $T$ and $T^G$.\footnote{Here and after, $\fin[T]$ and $\fin[T^G]$ denote the classes of finite models of $T$ and $T^G$, respectively.} Firstly, suppose $\M\in \fin[T]$; we define $\M^G$ to be the $\rho^G$-structure with universe $M$ and the obvious interpretations,
$$R_\alpha^{\M^G} = \left\{\aa\in M^k:tp^k(\aa;\M)=\alpha\right\}$$
 for each $\alpha\in S^k_k(T)$. The fact that $\M^G\models T^G$ is an easy consequence of Theorem \ref{thm:complete-invariant}. Secondly, suppose $\mathfrak{A}\in\fin[T^G]$ with universe $A$. For $R^{(r)}\in\rho$, we set 
 $$R^{\mathfrak{A}^\mod}=\left\{(a_{i_1},...,a_{i_r}):(a_1,...,a_k)\in R_\alpha^{\mathfrak{A}}, T\models\forall\xx\left(\alpha(\xx)\cond R(x_{i_1},...x_{i_r})\right)\right\}.$$
It is essentially trivial to show that the $\rho$-structure $\mathfrak{A}^\mod$ is well-defined and, indeed, a model of $T$. Collecting these facts, we have:

\begin{obs}
The transformations $-^G$ and $-^\mod$ are inverses of each other; that is to say, for any $\M\in\fin[T]$ and any $\mathfrak{A}\in\fin[T^G]$,
$(\M^G)^\mod =\M$ and $(\mathfrak{A}^\mod)^G=\mathfrak{A}$.
\end{obs}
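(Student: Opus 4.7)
The plan is to verify the two claimed identities directly, exploiting the fact that both $-^G$ and $-^\mod$ leave the underlying universe untouched. So in both directions it suffices to check that the interpretations of the relation symbols agree, relation symbol by relation symbol.

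First I would handle $(\M^G)^\mod = \M$, which is little more than unraveling definitions. Fix $R \in \rho$ of arity $r$ together with index positions $i_1,\dots,i_r$. By the definition of $-^\mod$, a tuple $(a_{i_1},\dots,a_{i_r})$ lies in $R^{(\M^G)^\mod}$ exactly when there is some $k$-tuple $\aa$ with $\aa \in R_\alpha^{\M^G}$ and $T \models \forall\xx(\alpha(\xx)\cond R(x_{i_1},\dots,x_{i_r}))$. Since any $r$-tuple extends to some $k$-tuple (as $k \geq r$) and since $\aa \in R_\alpha^{\M^G}$ is by definition the statement $tp^k(\aa;\M) = \alpha$, this reduces to $\M \models R(a_{i_1},\dots,a_{i_r})$. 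Hence $R^{(\M^G)^\mod} = R^\M$, and so $(\M^G)^\mod = \M$.

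For the other direction $(\mathfrak{A}^\mod)^G = \mathfrak{A}$, I would prove a type-recovery lemma: using G1 to pick, for each $k$-tuple $\aa$ in $A$, the unique $\alpha \in S^k_k(T)$ with $\mathfrak{A} \models R_\alpha(\aa)$, the claim is that
$$tp^k(\aa;\mathfrak{A}^\mod) = \alpha.$$
This would be shown by induction on $L^k$-formulas $\phi(\xx)$, proving the biconditional $\mathfrak{A}^\mod \models \phi(\aa) \iff \alpha\models\phi$. The atomic case for $R \in \rho$ falls out of the definition of $-^\mod$ combined with completeness of $\alpha$: if $\alpha$ entails $R(x_{i_1},\dots,x_{i_r})$ the inclusion is automatic, and if $\alpha$ entails its negation then, since the partition in G1 is disjoint, no $\alpha'$ entailing $R(x_{i_1},\dots,x_{i_r})$ can hold at $\aa$. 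Equality atoms are handled by G2; variable-permutation moves --- needed because $L^k$-formulas constantly re-use variable slots --- are supplied by G3; Boolean combinations are routine.

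The crux, and the main obstacle, is the existential-quantifier step. For $\phi = \exists x_1\,\psi(x_1,\dots,x_k)$: if $\alpha \models \phi$, completeness of $\alpha$ forces some $\beta \in \emph{Acc}(\alpha,-)$ to entail $\psi$, and G6 supplies a witness $b$ with $\mathfrak{A} \models R_\beta(b,a_2,\dots,a_k)$, whereupon the inductive hypothesis at $\beta$ delivers $\mathfrak{A}^\mod \models \psi(b,a_2,\dots,a_k)$. Conversely, if $\mathfrak{A}^\mod \models \psi(b,a_2,\dots,a_k)$ for some $b$, then letting $\gamma$ be the $\mathfrak{A}$-label of $(b,a_2,\dots,a_k)$ (G1 again), G4 puts $\gamma \in \emph{Acc}(\alpha,-)$ while the inductive hypothesis yields $\gamma \models \psi$, whence $\alpha \models \exists x_1\,\psi$. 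These two steps are precisely the complementary halves of the accessibility encoding in $T^G$, and they mesh with the induction exactly as required. Once the type-recovery lemma is established, $(\mathfrak{A}^\mod)^G = \mathfrak{A}$ follows at once: by the definition of $-^G$ applied to $\mathfrak{A}^\mod$, the relation $R_\alpha^{(\mathfrak{A}^\mod)^G}$ is just $\{\aa \in A^k : tp^k(\aa;\mathfrak{A}^\mod) = \alpha\}$, which the lemma identifies with $R_\alpha^\mathfrak{A}$.
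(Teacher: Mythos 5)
The paper treats this observation as immediate from the constructions ("It is essentially trivial to show\ldots Collecting these facts, we have:") and offers no proof; your proposal is therefore a good-faith attempt to supply what the paper omits, and the overall strategy --- verify $(\M^G)^\mod = \M$ by unwinding definitions, then establish $(\mathfrak{A}^\mod)^G = \mathfrak{A}$ via a type-recovery lemma proved by induction on $L^k$-formulas, with the accessibility structure handling the quantifier step --- is exactly the right one. Your treatment of $(\M^G)^\mod = \M$ is fine, and the existential case of the induction is handled correctly: the two directions use G6 and G4 respectively, and you have correctly identified that they are complementary halves of the accessibility encoding.

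However, there is a genuine gap in the atomic base case. You argue: if $\alpha$ (the G1-label of $\aa$) entails $\neg R(x_{i_1},\dots,x_{i_r})$, then ``since the partition in G1 is disjoint, no $\alpha'$ entailing $R(x_{i_1},\dots,x_{i_r})$ can hold \emph{at} $\aa$.'' But by the definition of $-^\mod$, the tuple $(a_{i_1},\dots,a_{i_r})$ belongs to $R^{\mathfrak{A}^\mod}$ if there is \emph{any} $k$-tuple $\aa' \in R_{\alpha'}^{\mathfrak{A}}$ and any index sequence $(j_1,\dots,j_r)$ with $(a'_{j_1},\dots,a'_{j_r}) = (a_{i_1},\dots,a_{i_r})$ and $\alpha' \models R(x_{j_1},\dots,x_{j_r})$. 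Uniqueness of labels on a fixed tuple (G1) says nothing about whether the labels of two \emph{different} $k$-tuples sharing a common subtuple agree on what relations hold of that subtuple. This coherence of the labeling across overlapping tuples is exactly what must be proven, and it requires G3 and G4: one shows that any two $k$-tuples with a common $r$-subtuple are connected by a sequence of permutations and single-coordinate replacements, and that the definition of $\emph{Acc}$ forces each replacement step to preserve the atomic formulas in the untouched positions. Without this step the base case of your induction does not close, and your appeal to G3 appears only in a passing remark about ``variable-permutation moves'' rather than where it is actually load-bearing.
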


\noindent A model $\mathfrak{A}$ of $T^G$ is called a \emph{game tableau for $T$}, and the model $\mathfrak{A}^\mod$ is sometimes called the \emph{realization} of $\mathfrak{A}$. Moreover, for a given model $\M$ of $T$, the structure $\M^G$ is called the \emph{game tableau of $\M$}; thus, a model $\M$ of $T$ is the unique realization of its own game tableau. The theory $T^G$ is the \emph{theory of game tableaux of $T$}. Abusing notation slightly, we will write $T^G_\forall$ for the sub-theory consisting of the axioms G1 through G4.

Since there is nothing interesting to distinguish a model of $T$ from its game tableau and the transformation is polynomial-time computable, it is not really necessary to distinguish between finite models of $T$ and finite models $T^G$; consequently, we will also dispense with the gothic script. In the next section, we will see that working with game tableaux makes a model-theoretic analysis much more tractable than would be the case in the original signature. The correspondence goes just a bit further in the following proposition (whose proof we omit because it is very simple).

\begin{prop}\label{T^G-preservation}
Let $\M$ and $\N$ be models of $T$, and let $A\subseteq M$. For any mapping $f:A\to N$, the following are equivalent:
\begin{enumerate}
\item $f$ is a partial $L^k$-elementary embedding $\M\pto\N$.
\item $f$ is a partial $\rho^G$-isomorphism $\M^G\pto \N^G$.
\end{enumerate}
\end{prop}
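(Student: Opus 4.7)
The plan is to reduce both notions to assertions about $k$-variable $k$-types on $A^k$, leveraging the definitional fact that $R_\alpha^{\M^G} = \{\aa \in M^k : tp^k(\aa;\M) = \alpha\}$ (and similarly for $\N$). Once this bridge is in place, both implications are essentially immediate from the fact that each $\alpha(\xx) \in S^k_k(T)$ is itself a (principal, hence single-$L^k$-formula-generated) set of $L^k$-formulas in $k$ free variables.

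For the direction $(1)\Rightarrow(2)$, I would fix $\aa \in A^k$ and note that the partial $L^k$-elementary hypothesis ensures $\M \models \alpha(\aa) \iff \N \models \alpha(f(\aa))$ for every $\alpha \in S^k_k(T)$. Since the relation symbols of $\rho^G$ are precisely $\{R_\alpha : \alpha \in S^k_k(T)\}$ (all of arity $k$) and their interpretations literally match the types, $f$ preserves every atomic $\rho^G$-formula on tuples from $A$, which is exactly the content of being a partial $\rho^G$-isomorphism.

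For the direction $(2)\Rightarrow(1)$, a partial $\rho^G$-isomorphism $f$ transports each $R_\alpha$, so for every $\aa \in A^k$ one has $tp^k(\aa;\M) = tp^k(f(\aa);\N)$. Given an arbitrary $L^k$-formula $\phi(x_1,\ldots,x_n)$ with $n \leq k$ and $\aa \in A^n$, I would pad $\aa$ to a $k$-tuple $\aa' \in A^k$ by repeating one of its entries (or handle $A = \emptyset$ together with the case of $L^k$-sentences separately, relying on $\M, \N \models T$). Viewing $\phi$ as an $L^k$-formula in $k$ free variables via trivial conjuncts $x_j = x_j$ for $j > n$, the shared $k$-type $tp^k(\aa';\M) = tp^k(f(\aa');\N)$ decides $\phi$ identically in the two structures, yielding $\M \models \phi(\aa) \iff \N \models \phi(f(\aa))$.

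The only real bookkeeping concern, which I expect to be the main (though minor) obstacle, is the careful handling of short tuples and tuples with repeated entries: the equality structure of each $\aa'$ is encoded in its type via the clause $\mu_\alpha$ from the axiomatization of $T^G$, so no information is lost by padding, but one must verify that the padding stays inside $A$ (vacuous degenerate cases aside) and that the augmented formula still lies in $L^k$ rather than silently acquiring a new variable.
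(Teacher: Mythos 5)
Your proof is correct and follows what is evidently the intended argument: both conditions are equivalent to $tp^k(\aa';\M)=tp^k(f(\aa');\N)$ for all $k$-tuples $\aa'$ from $A$, since $R_\alpha^{\M^G}$ is by definition the set of realizations of $\alpha$ in $\M$, and shorter tuples and sentences are handled by padding and by $\M\equiv^k\N$. (The remark about each $\alpha$ being isolated by a single $L^k$-formula is true but not actually needed: in $(1)\Rightarrow(2)$ the equality of full $k$-types follows directly from preservation of all $L^k$-formulas, without invoking principality.)
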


In particular, if $\M$ is a model of $T$, then the complete {\em quantifier-free} type $qtp(\aa;\M^G)$ of a tuple $\aa$ in the sense of $\M^G$ is equivalent, for our purposes, to the complete $k$-variable type $tp^k(\aa;\M)$.

%%%%%%%%%%%%%%%%%%%%%%%%%%%%%%%%%%%%%%%%%%%%%%%%%%%%%%%%%%%%%
%%%%%%%%%%%%%%%%%%%%%%%%%%%%%%%%%%%%%%%%%%%%%%%%%%%%%%%%%%%%%
\subsection*{Capped theories and amalgamation}

We will say that $T$ is a \emph{capped theory} if for any finite model $\A$ of $T^G_\forall$, there is a finite model $\mathcal{G}$ of $T^G$ such that $\A\leq\mathcal{G}$ -- that is, such that $\A$ is an induced substructure of $\mathcal{G}$. By proposition \ref{T^G-preservation}, any $T$ for which the Strong $L^k$-Canonization problem is solvable must be capped.

\begin{lemma}\label{lemma:weak-G-amalgamation}
Suppose $\A$, $\M_0$ and $\M_1$ are models of $T^G$. Suppose $\A$ is a substructure of both $\M_0$ and $\M_1$, and $M_0\cap M_1=A$. Then there is a model $\C$ of $T^G_\forall$ and $\rho^G$-embeddings $g_i:\M_i\to\C$ such that $g_0\r A = g_1\r A$.
\end{lemma}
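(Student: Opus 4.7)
The plan is to construct $\C$ as a sufficiently saturated $L^k$-elementary extension of $\M_0$ (passed through the game-tableau correspondence $-^G \leftrightarrow -^\mod$) and to embed $\M_1$ into it over $A$ by a forth-only induction, with Proposition \ref{T^G-preservation} as the essential ingredient. Concretely, I pass to the $\rho$-reduct $\M_0^\mod$, take a $|M_1|^+$-saturated $L^k$-elementary extension $\M_0^\mod \preceq^k \M_0^{\mod,+}$, and set $\C := (\M_0^{\mod,+})^G$ in the signature $\rho^G$. Since $L^k$-elementary extensions preserve $Th^k$, we have $Th^k(\M_0^{\mod,+}) = T$, and hence $\C \models T^G \supseteq T^G_\forall$ as required. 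The inclusion $\M_0 \hookrightarrow \C$, which is a $\rho^G$-embedding by Proposition \ref{T^G-preservation} applied to the $L^k$-elementary inclusion of reducts, serves as $g_0$; it fixes $A$ pointwise.

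For $g_1: \M_1 \to \C$ fixing $A$, I enumerate $M_1 \setminus A = \{m_i\}_{i < \kappa}$ and build partial $\rho^G$-isomorphisms $f_i: A \cup \{m_j\}_{j < i} \to A \cup \{m_j^*\}_{j < i} \subseteq C$ inductively, starting from $f_0 := \mathrm{id}_A$ (a partial $\rho^G$-iso because $\A$ is a $\rho^G$-substructure of both $\M_1$ and of $\M_0 \preceq^k \M_0^{\mod,+}$). At stage $i$, let $p(x) := tp^k(m_i / A \cup \{m_j\}_{j<i}; \M_1^\mod)$ and let $p^*(x)$ be its pushforward under $f_i$ to a partial $L^k$-type over $A \cup \{m_j^*\}_{j<i}$ in $\M_0^{\mod,+}$. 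By Proposition \ref{T^G-preservation}, $f_i$ is a partial $L^k$-elementary map, so for every finite conjunction $\psi(x, \bar c^*)$ of formulas drawn from $p^*$, the $L^k$-sentence $\exists x\,\psi(x, \bar c^*)$ holds in $\M_0^{\mod,+}$ iff $\exists x\,\psi(x, f_i^{-1}(\bar c^*))$ holds in $\M_1^\mod$ --- and the latter is witnessed by $m_i$. Hence $p^*$ is finitely satisfiable in $\M_0^{\mod,+}$, and by $|M_1|^+$-saturation it is realized by some $m_i^* \in C$. Setting $f_{i+1} := f_i \cup \{(m_i, m_i^*)\}$ preserves the partial $\rho^G$-isomorphism property (again by Proposition \ref{T^G-preservation}, since $m_i^*$ realizes $p^*$), and $g_1 := \bigcup_{i < \kappa} f_i$ is the required total $\rho^G$-embedding.

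The main obstacle --- and the point at which the lemma requires any real work --- is justifying the finite satisfiability step. We need to transfer $L^k$-existentials across $f_i$ even though the parameters $\{m_j^*\}_{j < i}$ lie outside $A$, beyond the direct reach of the $L^k$-elementarity $\A \preceq^k \M_0, \M_1$ supplied by the hypotheses. The resolution is precisely Proposition \ref{T^G-preservation}, which upgrades any $\rho^G$-partial isomorphism of substructures into a partial $L^k$-elementary map, preserving not merely atomic $R_\alpha$-facts but all $L^k$-formulas (including quantified ones) over its domain. With this upgrade, finite satisfiability propagates cleanly along the induction, and the resulting $\C$ satisfies $T^G$ --- even stronger than the $T^G_\forall$ the lemma demands.
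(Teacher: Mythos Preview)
Your forth-only induction has a genuine gap at the finite-satisfiability step, and it is precisely the point you flag as ``the main obstacle.'' Proposition~\ref{T^G-preservation} tells you that a partial $\rho^G$-isomorphism $f_i$ preserves all $L^k$-formulas over its domain. But the sentence you need to transfer, namely $\exists x\,\bigwedge_{j<n}\phi_j(x,\bar c_j^*)$, is \emph{not} an $L^k$-sentence once the parameter tuples $\bar c_1^*,\ldots,\bar c_{n-1}^*$ are distinct: the conjunction names $x$ together with up to $n(k{-}1)$ parameters, far more than the $k$ variable slots permitted in $L^k$. A partial $L^k$-elementary map gives you, for each single $(k{-}1)$-tuple $\bar d^*$, some element realizing the correct $k$-type over $\bar d^*$ (this is axiom G6), but it gives no coherence across different $(k{-}1)$-tuples. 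So $p^*$ need not be finitely satisfiable in $\M_0^{\mod,+}$, and saturation does not help you. In effect, the one-point extension property you are invoking is equivalent to the amalgamation statement you are trying to prove.

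The paper's proof avoids this entirely by working combinatorially on the finite set $Z = M_0\cup M_1$: it freely assigns $R_\alpha$-labels to mixed $k$-tuples consistent with their projections into $\M_0$ and $\M_1$, and simultaneously builds an equivalence relation $E$ on $Z$ recording the equalities that $T^G_\forall$ forces. The delicate point (the Claim in the proof) is that $E$ never identifies two distinct elements of the same $M_i$, so the quotient maps $g_i:b\mapsto b/E$ remain injective. This yields a \emph{finite} $\C\models T^G_\forall$, which is essential for the subsequent Amalgamation Theorem in $\fin[T^G]$: even if your saturated-model argument could be repaired, the resulting $\C$ would be infinite, and the capped-ness hypothesis only promises finite completions of finite models of $T^G_\forall$.
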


\begin{proof}
The idea of the proof is to construct a sort of free-join of $\M_0$ and $\M_1$ over $A$. It will not be a genuine free-join because even $T^G_\forall$ may induce some additional equalities of elements, and the modified equality relation will then be a non-trivial equivalence relation, say $E$, on $M_0\cup M_1$. It's key, then, to maintain the condition $E\cap (M_i\times M_i)=1_{M_i}$, $i<2$, in order to avoid obstructing the embeddings. It turns out that maintaining this invariant through the construction is actually sufficient to obtain the amalgam over $A$.

Let $Z=M_0\cup M_1$,  and let $Q^0_\alpha = R_\alpha^{\M_0}\cup R_\alpha^{\M_1}$ for each $\alpha\in S^k_k(T)$. Furthermore, set $X_0 = Z^k\setminus \bigcup_\alpha Q^0_\alpha$ and $E_0=1_{M_0}\cup 1_{M_1}$. Suppose we are then given,
$$X_s\subsetneqq X_{s-1}\subsetneqq\cdots\subsetneqq X_0$$
such that if $\cc\in X_s$ and $\sigma\in \emph{Sym}\,[k]$, then $(c_{\sigma(1)},...,c_{\sigma(k)})\in X_s$, and 
$$E_s\supsetneqq E_{s-1}\supsetneqq\cdots\supsetneqq E_0$$
where $E_s$ is an equivalence relation on $Z$ such that $E_s\cap (M_i\times M_i)=1_{M_i}$ for $i=0,1$. Let $0<t<k$, and let $c_1,...,c_t\in M_0$ and $c_{t+1},...,c_k\in M_1$ such that $\cc\in X_s$. Let $\eta_0,\eta_1\in S^k_k(T)$ such that
$$\M_0^\mod\models \eta_0(c_1,...,c_t,c_t,...,c_t)$$
and 
$$\M_1^\mod\models \eta_1(c_{t+1},...,c_k,c_k,...,c_k).$$
For brevity, we identify $\eta_0(\xx)$, which asserts $\bigwedge_{i=t+1}^kx_i=x_t$, with the $t$-type it asserts on $x_1,...,x_t$, and similarly for $\eta_1$.
We then take the following actions:

\begin{enumerate}

\item Let $\alpha\in S^k_k(T)$ such that 
$$T\models\forall \xx\left(\alpha(\xx)\cond \eta_0(x_1,...,x_t)\wedge \eta_1(x_{t+1},...,x_k)\right).$$
Set
$$Q^{s+1}_{\alpha} = Q^s_{\alpha}\cup (\eta_0(M_0^t)\times\eta_1(M_1^{k-t}))$$
defining $Q^{s+1}_{\alpha^\sigma}$ analogously for each $\sigma\in\emph{Sym}\,[k]$.

\item Let $E_{s+1}$ be the $\subseteq$-minimal equivalence relation on $Z$ containing $E_s$ and each $(c_i,c_j)$, $i\leq t<j$, such that 
$T\models\forall \xx\left(\alpha(\xx)\cond x_i=x_j\right)$.

\end{enumerate}

\begin{claim}
We can choose $\alpha$ so that $E_{s+1}\cap (M_i\times M_i)=1_{M_i}$, $i=0,1$.
\end{claim}

\begin{proof}[proof of claim] 
We prove the claim for $i=0$; the other statement follows by symmetry. Note that we may assume $s>0$.
Suppose $a,b\in M_0$, $a\neq b$ and $aE_{s+1}b$. We may assume that $(a,b)\in E_{s+1}\setminus E_s$ and that $aE_sc$ and $bE_{s+1}c$ for some $c\in M_1$. In particular, there are (w.l.o.g.) elements
\begin{align*}
a_1=a,a_2,...,a_{t'} &\in M_0\\
c'_{t'+1}=c,c'_2,...,c_{k-t'} &\in M_1\\
b_1=b,b_2,...,b_t&\in M_0\\
c_{t+1}=c,c_2,...,c_{k-t} &\in M_1
\end{align*}
such that at step $s-1$, we acted on 
$$\zeta_0=tp^k(pad_k(\aa);\M_0^\mod),\,\zeta_1=tp^k(pad_k(\cc');\M_1^\mod)$$
 and at step $s$ (as above), we acted on 
 $$\eta_0=tp^k(pad_k(\bb);\M_0^\mod),\,\eta_1=tp^k(pad_k(\cc);\M_1^\mod).$$
 Since $\zeta_0\wedge\zeta_1\models x_1=x_{t'+1}$ and $\eta_0\wedge\eta_1\models x_1=x_{t+1}$, we now that 
 $$tp^k(a;\M_0^\mod)=tp^k(c;\M_1^\mod)=tp^k(b;\M_0^\mod).$$
 As $\M_0$ is a model of $T^G$, there are 
 $a'_1=a,a'_2,...a'_t\in M_0$ such that 
 $$tp^k(pad_k(\aa');\M_1^\mod)=\eta_0.$$
 Again, because $\M_0$ is a model of $T^G$, there are $d_{t+1},...,d_k\in M_0$ such that $tp^k(pad_k(\dd);\M_0^\mod)$ is equal to $\eta_1$.
 Now, 
\begin{align*}
\eta_0(\aa')\wedge\eta_1(\dd) \,\,&\implies\,\, a=d_{t+1}\\
\eta_0(\bb)\wedge\eta_1(\dd) \,\,&\implies\,\, b=d_{t+1}
\end{align*}
so in fact, $a=b$, a contradiction.
\end{proof}

\noindent
Since $Z$ is finite, there is a number $n<\omega$ such that $X_n=\emptyset$. (In fact, $n\leq |S^k_k(T)|^2$.) Let $C = Z/E_n$, and for $\alpha\in S^k_k(T)$, let 
$$R_\alpha^\C=\left\{(c_1/E_n,...,c_k/E_n) :(c_1,...,c_k)\in Q^n_{\alpha}\right\}.$$
For $i=0,1$, define $g_i:B_i\to C$ by $g_i(b)=b/E_n$. 
It remains to verify that the triple $(\C,g_0,g_1)$ satisfies the requirements of the lemma. 
 G1: For each $k$-tuple $\cc=(c_1,...,c_k)\in Z^k$, either $\cc\in M_0^k\cup M_1^k$ or $\cc\in X_{s-1}\setminus X_s$ for some unique $s\leq n$; hence, $\cc$ is certainly assigned a unique type. 
G2 is immediate from the claim we proved above, and G3 follows directly from the construction.
 G4 is just plain old immediate.
Finally, it's relatively easy to see that $g_0$ and $g_1$ are $\rho^G$-embeddings that agree on $A$ (in fact, each is the identity map on $A$).
\end{proof}

The lemma, together with the assumption that $T$ is a capped theory, easily yields the following very useful fact.

\begin{thm}[Amalgamation theorem: AP/models in {$\fin [T^G]$}]
Assume that $T$ is capped. Suppose $\A$, $\M_0$ and $\M_1$ are models of $T^G$. Suppose $\A$ is a substructure of both $\M_0$ and $\M_1$, and $M_0\cap M_1=A$. 
 Then there are a model $\N$ of $T^G$ and $\rho^G$-embeddings $g_i:\M_i\to\N$ such that $g_0\r A=g_1\r A$.
\end{thm}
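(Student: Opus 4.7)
The plan is to reduce the statement directly to the weak amalgamation Lemma \ref{lemma:weak-G-amalgamation} combined with the capped hypothesis on $T$. First, I apply the lemma to obtain a finite model $\C$ of $T^G_\forall$ together with $\rho^G$-embeddings $f_i\colon \M_i \to \C$ ($i = 0,1$) that agree on $A$. This step absorbs all of the combinatorial work of freely joining $\M_0$ and $\M_1$ over $A$ while guaranteeing that no two distinct elements of the same $\M_i$ get identified.

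Next, I invoke the capped hypothesis: since $\C$ is a finite model of $T^G_\forall$, there exists a finite model $\N$ of $T^G$ such that $\C$ sits inside $\N$ as an induced $\rho^G$-substructure. Let $\iota\colon \C \hookrightarrow \N$ denote the inclusion, and define $g_i := \iota \circ f_i$ for $i = 0,1$. The restriction equality $g_0$ and $g_1$ agree on $A$ is then immediate from the corresponding property of the $f_i$.

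Finally, I verify that each $g_i$ is genuinely a $\rho^G$-embedding $\M_i \to \N$. Fix a $k$-tuple $\aa \in M_i^k$ and any $\alpha \in S^k_k(T)$. Because $f_i$ is a $\rho^G$-embedding, $\M_i \models R_\alpha(\aa)$ iff $\C \models R_\alpha(f_i(\aa))$; because $\C$ is an \emph{induced} $\rho^G$-substructure of $\N$, this in turn is equivalent to $\N \models R_\alpha(g_i(\aa))$. Thus $g_i$ preserves and reflects the full atomic $\rho^G$-type of every $k$-tuple, which by axiom G1 is exactly what it means to be a $\rho^G$-embedding.

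I expect no serious obstacle in this argument: the entire combinatorial heart of amalgamation has already been carried out in Lemma \ref{lemma:weak-G-amalgamation}, and the role of the capped hypothesis is precisely to promote a witness in $T^G_\forall$ into a genuine model of $T^G$ without introducing new identifications. The only subtlety worth flagging is that one must genuinely use the ``induced substructure'' clause in the definition of capped (as opposed to merely ``substructure''), for it is this clause that prevents new $R_\alpha$-facts from appearing over tuples already present in $\C$ and thereby lets the composed maps $g_i$ remain true $\rho^G$-embeddings rather than one-sided morphisms.
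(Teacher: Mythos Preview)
Your proposal is correct and matches the paper's intended argument exactly: the paper simply states that the lemma, together with the capped assumption, ``easily yields'' the theorem, and your write-up spells out precisely that two-step reduction (apply Lemma~\ref{lemma:weak-G-amalgamation} to get a finite $T^G_\forall$-amalgam, then cap it). Your remark about needing the \emph{induced} substructure clause is apt and is the only point worth making explicit.
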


%%%%%%%%%%%%%%%%%%%%%%%%%%%%%%%%%%%%%%%%%%%%%%%%%%%%%%%%%%%%%%%%
%%%%%%%%%%%%%%%%%%%%%%%%%%%%%%%%%%%%%%%%%%%%%%%%%%%%%%%%%%%%%%%%
%%%%%%%%%%%%%%%%%%%%%%%%%%%%%%%%%%%%%%%%%%%%%%%%%%%%%%%%%%%%%%%%
%%%%%%%%%%%%%%%%%%%%%%%%%%%%%%%%%%%%%%%%%%%%%%%%%%%%%%%%%%%%%%%%
\section{Directed acyclic graphs and d-separation.}\label{app:dags}

%%%%%----------------------------------------------------------------------------------------------------------------------------------------------------%%%%%
%%%%%----------------------------------------------------------------------------------------------------------------------------------------------------%%%%%
\begin{defn}[Dags and descendants]
We recall a few standard definitions around graphs.
\begin{enumerate}
\item A digraph is a pair $G = (V,E)$, where $V$ is a nonempty set and $E\subseteq V\times V$ is such that $(v,v)\notin E$ whenever $v\in V$.

The underlying (undirected) graph of $G$ is, then, $\tilde G = (V,\tilde E)$ where $\tilde E = \left\{\{u,v\}:(u,v)\in E\right\}$.

Where convenient, we will understand digraphs as structures with signature $\{R^{(2)}\}$; so, if $G = (V,E)$ is a digraph, the associate $\{R\}$-structure is $G = (V,R^G)$, where $R^G = E$.

\item A \emph{path} in $G$ is a sequence $(v_1,...,v_n)$ from $V^{<\omega}$ such that  $(v_i,v_{i+1})\in E$ for each $i=1,...,n-1$.

Similarly, {path} in $\tilde G$ is a sequence $(v_1,...,v_n)$ from $V^{<\omega}$ such that  $\{v_i,v_{i+1}\}\in \tilde E$ for each $i=1,...,n-1$.

\item The digraph $G$ is \emph{acyclic} if there is no path $(v_1,...,v_n)$ in $G$ such that $v_1 =v_n$. (Note that this does not imply that $\tilde G$ is acyclic.)

An acyclic digraph $G$ is also called a \emph{dag} (for ``directed acyclic graph''). 

\item In a dag $G$, we say that $v\in V$ is a {\em (proper) descendant} of $u\in V$ just in case there is a path $(v_1,...,v_n)$ in $G$ such that $v_1 = u$ and $v_n = v$. For a subset $X\subseteq V$, we define 
$$\desc_G(X) = X\cup \bigcup_{x\in X}\left\{v\in V: \textnormal{$v$ is a descendant of $x$}\right\}.$$
It can be shown that $G = (V,E)$ is a dag if and only if the transitive closure of $E$ is a partial order, say $<_E$, of $V$; in this scenario, then, $\desc_G(X) = \left\{v\in V:\exists x\in X.\, x\leq_E v\right\}$

\item Assuming $G$ is a dag, a \emph{trail} in $G$ is a sequence $(v_1,...,v_n)$ from $V^{<\omega}$ which is a path in the $\tilde G$. That is, for each $i=1,...,n-1$, either $(v_i,v_{i+1})\in R^G$ or $(v_{i+1},v_i)\in R^G$. (Not both because $G$ is acyclic.)
\end{enumerate}
\end{defn}

%%%%%----------------------------------------------------------------------------------------------------------------------------------------------------%%%%%
%%%%%----------------------------------------------------------------------------------------------------------------------------------------------------%%%%%
\begin{defn}[d-Separation in dags]
Let $G=(G,R^G)$ be a dag, and let $\tilde G = (G,\tilde R^G)$ be the underlying undirected graph of $G$.
\begin{enumerate}

\item Let $t = (v_1,...,v_n)$ be a trail in $G$, and let $i\in \{2,...,n-1\}$.
\begin{enumerate}
\item $t$ is {\em head-to-tail} at $v_i$ if either $G\models R(v_{i-1},v_i)\wedge R(v_i,v_{i+1})$ or $G\models R(v_{i+1},v_i)\wedge R(v_i,v_{i-1})$.
\item $t$ is {\em tail-to-tail} at $v_i$ if $G\models R(v_i,v_{i-1})\wedge R(v_i,v_{i+1})$.
\item $t$ is {\em head-to-head} at $v_i$ if $G\models R(v_{i-1}v_i)\wedge R(v_{i+1},v_i)$.

\end{enumerate}

\item Let $Z\subseteq G$, and let  $t = (v_1,...,v_n)$ be a trail in $G$. We say that $t$ is {\em $Z$-blocked} if for some $i\in\{2,...,n-1\}$, one of the following holds:
\begin{enumerate}
\item $t$ is head-to-tail or tail-to-tail at $v_i$ and $v_i\in Z$;
\item $t$ is head-to-head at $v_i$ and $Z\cap \desc_G(\{v_i\}) = \emptyset$.
\end{enumerate}

\item Let $X,Y,Z\subseteq G$. We say that {\em $X$ and $Y$ are d-separated by $Z$ (in $G$)} if
\begin{enumerate}
\item $X\cap Y\subseteq Z$;
\item  For every trail  $t = (v_1,...,v_n)$ in $G$ such that $v_1\in X\setminus Z$ and $v_n\in Y\setminus Z$, $t$ is $Z$-blocked.
\end{enumerate}
  We write $[X\amalg Y\,|\,Z]_G$ to indicate that $X$ and $Y$ are d-separated by $Z$ in $G$.
\end{enumerate}
\end{defn}

The notion of d-separation seems to have arisen, originally, in statistical learning theory -- as in \cite{pearl-verma-1987}. In that domain, d-separation in a finite dag $G=(V,E)$ is a means of representing conditional independence under a joint probability distribution $p$ on a system of random variables $(X_v)_{v\in V}$. Let $\texttt{parents}_G(v) = \left\{u\in V:(u,v)\in E\right\}$, the graph structure is taken as a synonym for the assertion,
$$p((X_v=a_v)_v) = \prod_{v\in V}p(X_v=a_v\,|\,(X_u = b_u)_{u\in \texttt{parents}_G(v)}).$$
One might, then, be interested in the conditional independence properties of marginals of $p$. Let us write $p(A=a)$ as shorthand for the marginal distribution $p((X_v=a_v)_{v\in A})$, where $A\subseteq V$ and $a\in \prod_{v\in V}\Sigma_v$ so that each $v\in V$ corresponds to a $\Sigma_v$-valued random variable. Then, given $A,B,C\subseteq V$, one might ask if 
$p(A=a, B=b\,|\, C=c) = p(A=a\,|\,C=c)\cdot p(B=b\,|\,C=c)$. It can be shown, as in \cite{pearl-verma-1987} and \cite{bishop-2006}, that if $A,B,C$ are pairwise disjoint, then $p(A=a, B=b\,|\, C=c) = p(A=a\,|\,C=c)\cdot p(B=b\,|\,C=c)$, for all $a,b,c$, if and only if $[A\amalg B\,|\,C]_G$ holds.

%%%%%----------------------------------------------------------------------------------------------------------------------------------------------------%%%%%
%%%%%----------------------------------------------------------------------------------------------------------------------------------------------------%%%%%
\begin{thm}
Let $G = (G,R^G)$ be a dag.
\begin{itemize}
\item[] 0: $[X\amalg Y\,|\,Z]_G\,\,\iff\,\,X\cap Y\subseteq Z\wedge [\,(X\setminus Y)\amalg (Y\setminus Z)\,|\,Z]_G$
\item[] \emph{Symmetry}: $[X\amalg Y\,|\,Z]_G\,\,\implies\,\, [Y\amalg X\,|\,Z]_G$
\item[]  \emph{Monotonicity}: $[X\amalg Y\,|\,Z]_G\wedge Y_0\subseteq Y\,\,\implies\,\,[X\amalg Y_0\,|\,Z]_G$
\item[]  \emph{Base-monotonicity}: $[X\amalg Y\,|\,Z]_G\wedge Y_0\subseteq Y \,\,\implies\,\,[X\amalg (Y\setminus Y_0)\,|\,Z\cup Y_0]_G$
\item[]  \emph{Triviality}: $[X\amalg Y_1\,|\,Z]_G\wedge [X\amalg Y_1\,|\,Z]_G\,\,\implies \,\,[X\amalg Y_1\cup Y_2\,|\,Z]_G$
\end{itemize}
\end{thm}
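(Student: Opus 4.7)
The plan is to verify each of the five clauses by direct combinatorial arguments on trails in $G$. Throughout, one unpacks the defining condition ``every trail $(v_1,\dots,v_n)$ with $v_1\in X\setminus Z$ and $v_n\in Y\setminus Z$ is $Z$-blocked,'' so each property translates into a statement about manipulating trails. Clause 0 is essentially a tautology: assuming $X\cap Y\subseteq Z$, one checks that $X\setminus Z = (X\setminus Y)\setminus Z$, so the set of trails to be $Z$-blocked is literally the same on both sides of the equivalence; the forward direction just extracts the definitional requirement $X\cap Y\subseteq Z$ and then uses this identity. For Symmetry, the observation is that reversing a trail $(v_1,\dots,v_n)\mapsto(v_n,\dots,v_1)$ preserves the local type (head-to-head, head-to-tail, tail-to-tail) at every interior vertex $v_i$: the head-to-head and tail-to-tail conditions are each symmetric in the two incident edges, and head-to-tail is explicitly given as a disjunction of the two orientations. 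Hence $Z$-blocking is preserved, and $X\cap Y=Y\cap X$ takes care of the endpoint-intersection clause. Monotonicity is immediate: if $Y_0\subseteq Y$, then every trail from $X\setminus Z$ to $Y_0\setminus Z$ is in particular a trail from $X\setminus Z$ to $Y\setminus Z$, so it inherits $Z$-blocking; and $X\cap Y_0\subseteq X\cap Y\subseteq Z$. Triviality (with the obvious typo fix, $Y_2$ in place of the second $Y_1$) is handled by case analysis on the endpoint: any trail ending in $(Y_1\cup Y_2)\setminus Z$ ends in $Y_i\setminus Z$ for some $i\in\{1,2\}$, and the corresponding hypothesis $[X\amalg Y_i\,|\,Z]_G$ supplies the $Z$-blocking.

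The main obstacle is Base-monotonicity, which is genuinely subtle because a $Z$-blocking vertex of head-to-head type need not remain $(Z\cup Y_0)$-blocking: enlarging the conditioning set can \emph{open} previously blocked head-to-head junctions by providing new descendants. I will argue by contradiction: assume $[X\amalg Y\,|\,Z]_G$ and let $t=(v_1,\dots,v_n)$ be a trail from $X\setminus(Z\cup Y_0)$ to $(Y\setminus Y_0)\setminus(Z\cup Y_0)$ that is not $(Z\cup Y_0)$-blocked. Since $X\setminus(Z\cup Y_0)\subseteq X\setminus Z$ and $Y\setminus(Z\cup Y_0)\subseteq Y\setminus Z$, the trail $t$ is certainly $Z$-blocked; pick the \emph{smallest} index $i$ at which $Z$-blocking occurs. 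One checks that this blocking cannot be of head-to-tail or tail-to-tail type (otherwise $v_i\in Z\subseteq Z\cup Y_0$ and $(Z\cup Y_0)$-blocking holds at $v_i$, contradicting the assumption on $t$), so $v_i$ must be head-to-head with $Z\cap\desc_G(\{v_i\})=\emptyset$. The failure of $(Z\cup Y_0)$-blocking at $v_i$ then forces $Y_0\cap\desc_G(\{v_i\})\neq\emptyset$; pick a directed witnessing path $v_i=u_0\to u_1\to\cdots\to u_k=w$ with $w\in Y_0$.

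Now splice this directed path onto the initial segment of $t$ to form $t'=(v_1,\dots,v_i,u_1,\dots,u_k)$. Since $w\in\desc_G(\{v_i\})$ and $Z\cap\desc_G(\{v_i\})=\emptyset$, one has $w\notin Z$, so $w\in Y\setminus Z$ and thus $t'$ is a trail from $X\setminus Z$ to $Y\setminus Z$, which by hypothesis must be $Z$-blocked somewhere. I will then rule out every potential blocking position on $t'$: for $j<i$ the local type at $v_j$ is unchanged from $t$, and by minimality of $i$ the trail $t$ was not $Z$-blocked there, so neither is $t'$; at $v_i$ the local type of $t'$ has switched to head-to-tail (outgoing edge to $u_1$, incoming from $v_{i-1}$), and the blocking condition $v_i\in Z$ fails because $v_i\in\desc_G(\{v_i\})$ and $Z\cap\desc_G(\{v_i\})=\emptyset$; and at each $u_j$ with $0<j<k$ the trail $t'$ is head-to-tail on a directed segment, with $u_j\in\desc_G(\{v_i\})$ and therefore $u_j\notin Z$. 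This contradicts the $Z$-blocking of $t'$ and completes the proof. The only delicate point is bookkeeping the local-type change at $v_i$ when the incoming head-to-head is converted to an outgoing head-to-tail by the spliced directed path, which is why choosing $i$ minimal (rather than arbitrary) is essential — it guarantees that no prior position of $t'$ picks up $Z$-blocking.
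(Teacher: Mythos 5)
The paper states this theorem without proof, deferring to the statistical-learning-theory literature (Pearl--Verma, Bishop); there is no in-text argument to compare against, so the only question is whether your argument is sound on its own, and it is. Clauses 0, Symmetry, Monotonicity, and Triviality (with the evident typo $Y_2$ corrected in the second hypothesis) are routine, and your reading of the definitions is accurate: the identity $X\setminus Z=(X\setminus Y)\setminus Z$ under $X\cap Y\subseteq Z$ does make clause 0 a tautology; trail reversal preserves the local type at each interior position because head-to-head and tail-to-tail are each symmetric in the two incident edges while head-to-tail is \emph{defined} as the disjunction of both orientations, and the descendant condition in the head-to-head blocking clause depends only on the vertex and not on the trail carrying it.

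Base-monotonicity is where the content lies, and your splicing argument is correct. Taking the \emph{minimal} $Z$-blocking index $i$ of $t$ is the right move: it forces $v_i$ to be head-to-head with $Z\cap\desc_G(\{v_i\})=\emptyset$ (otherwise $v_i\in Z\subseteq Z\cup Y_0$ would already give $(Z\cup Y_0)$-blocking, contradicting the choice of $t$), and the failure of $(Z\cup Y_0)$-blocking at $i$ produces $w\in Y_0\cap\desc_G(\{v_i\})$. The spliced trail $t'=(v_1,\dots,v_i,u_1,\dots,u_k)$ runs from $X\setminus Z$ to $Y\setminus Z$ yet has no $Z$-blocked interior position: minimality of $i$ clears positions $2,\dots,i-1$, and every vertex of the spliced suffix, including $v_i$ itself, lies in $\desc_G(\{v_i\})$, which is disjoint from $Z$, and sits at a head-to-tail junction of $t'$. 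Two points deserve to be made explicit when you write this up. First, when $k=0$ (that is, $v_i\in Y_0$ already), the vertex $v_i$ is the \emph{terminal} vertex of $t'$ rather than interior, so the remark about its local type switching to head-to-tail is vacuous; but minimality of $i$ alone then shows $t'=(v_1,\dots,v_i)$ has no $Z$-blocked position, and the same contradiction follows. Second, the spliced trail may revisit vertices already on $t$; this is permitted by the paper's definition of a trail as any sequence that is a path in $\tilde G$ (no injectivity is imposed), and since $Z$-blocking is evaluated position by position rather than vertex by vertex, no difficulty arises.
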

The names of the properties are chosen, here, to correspond to those similar properties of model-theoretic independence relations, and consequently, they {\em are not} the names used in the statistical learning theory literature. In the latter, the names for Symmetry, Monotonicity, Base-monotonicity, and Triviality are, respectively, Symmetry, Decomposition, Weak Union, and Contraction.

%For the sake keeping the reader on the track, we provide the proof of the Base-monotonicity property -- this being both the only non-trivial fact in the list and the only one which {\em is not true} in some formulations.
%\begin{proof}[Proof of base-monotonicity]
%It will be convenient to prove a weaker version of Base-monotonicity for pairwise disjoint sets.
%\begin{claim}[Weak base-monotonicity]
%If $X',Y_1',Y_2',Z'$ are pairwise disjoint subsets of $V$, then, 
%$$[X'\amalg Y'_1\cup Y'_2\,|\,Z]_G\,\,\implies\,\, [X'\amalg Y'_2\,|\,Z\cup Y'_1]_G$$
%\end{claim}
%\begin{proof}[proof of claim]
%
%\end{proof}
%Now, suppose that $X,Y,Z\subseteq V$ and $Y_0\subseteq Y$, and suppose $[X\amalg Y\,|\,Z]_G$. To prove Base-monotonicity, then, we simply take:
%$X' = X\setminus (Z\cup Y_0)$, $Y_1' = Y_0\setminus Z$, $Y'_2 = Y\setminus (Z\cup Y_0)$, and $Z' = Z$.
%which are pairwise disjoint subsets of $V$. By property 0, $[X\amalg Y\,|\,Z]_G$ amounts to 
%$[X'\amalg (Y'_1\cup Y'_2)\,|\,Z]_G$, so that $[X'\amalg Y'_2\,|\,(Z\cup Y'_1)]_G$ follows by Weak base-monotonicity. Using property 0, it is not hard to see that the latter implies $[X\amalg Y\,|\,(Z\cup Y_0)]_G$, as desired.
%\end{proof}
%
%
%

\end{document}